\title{Signed magic rectangles with two filled cells in each column}
\author {
Abdollah Khodkar and Brandi Ellis\\
Department of Mathematics\\
University of West Georgia\\
Carrollton, GA 30118\\
{\tt akhodkar@westga.edu},
{\tt bellis5@my.westga.edu}
}
\date{}
\newtheorem{prelem}{{\bf Theorem}}
 \newtheorem{theorem}{Theorem}
\newtheorem{lemma}[theorem]{Lemma}
\newtheorem{proposition}[theorem]{Proposition}
\theoremstyle{definition}
\theoremstyle{remark}
\begin{document}

\maketitle

\begin{abstract}
\noindent A {\em signed magic rectangle} $SMR(m,n;r, s)$ is an $m \times n$ array with entries from $X$, where
$X=\{0,\pm1,\pm2,\ldots, $ $\pm (mr-1)/2\}$ if $mr$ is odd and $X = \{\pm1,\pm2,\ldots,\pm mr/2\}$ if $mr$ is even,
such that precisely $r$ cells in every row and $s$ cells in every column are filled,
every integer from set $X$ appears exactly once in the array and
the sum of each row and of each column is zero. In this paper, we prove that a signed magic rectangle
$SMR(m,n;r, 2)$ exists if and only if either $m=2$, $n\equiv 0,3 \pmod 4$ and $n=r$ or
$m,r\geq 3$ and $mr=2n$.
\end{abstract}

\section{Introduction}\label{SEC1}

A {\em magic rectangle} of order $m\times n$, $MR(m,n)$, is an arrangement of the numbers from 0 to $mn-1$ in an $m\times n$ rectangle such that each number occurs exactly once in the rectangle and the sum of the entries of each row is the same and the sum of entries of each column is also the same.
The following theorem, whose proof can be found in \cite {TH1, TH2} and \cite {sun}, settles the existence of an $MR(m,n)$.

\begin{theorem}\label{TH:sun}
An $m \times n$ magic rectangle exists if and only if $m \equiv n \pmod 2$, $m + n > 5$, and $m, n > 1$.
\end{theorem}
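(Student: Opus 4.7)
The plan is a standard two-direction proof.

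For necessity, the total entry sum is $\sum_{k=0}^{mn-1} k = mn(mn-1)/2$, so a common row sum $R$ and common column sum $C$ must satisfy $mR = nC = mn(mn-1)/2$, giving $R = n(mn-1)/2$ and $C = m(mn-1)/2$. Requiring both to be integers forces $m \equiv n \pmod{2}$: if $mn$ is odd then $mn-1$ is even and $m,n$ are both odd, while if $mn$ is even then $mn-1$ is odd and integrality of $R$ and $C$ separately forces $n$ and $m$ to be even. The restriction $m, n > 1$ is necessary because an $MR(1, n)$ with $n \geq 2$ would require $n$ distinct single-entry column sums to share a common value. The last excluded case $m = n = 2$ fails by direct inspection: the only pairing of $\{0,1,2,3\}$ into two pairs of sum $3$ is $\{0,3\}$ and $\{1,2\}$, and no $2 \times 2$ placement of these pairs yields columns of sum $3$.

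For sufficiency I would construct an $MR(m, n)$ explicitly in each admissible case, splitting on the parity of $m$ and $n$. When both are even, I would use the complementary pairing $k \leftrightarrow mn - 1 - k$, producing $mn/2$ pairs of constant sum $mn - 1$, and distribute them so each row receives $n/2$ pairs and each column receives $m/2$ pairs; an explicit $2 \times 2$ block construction, in which each block holds one complementary pair on each diagonal with the block representatives drawn from disjoint pairs, realizes this. When both are odd, I would induct on $m + n$ from the base case $MR(3, 3)$ (for example, the standard $3 \times 3$ magic square on $\{0, \dots, 8\}$): given an $MR(m, n)$, shift each of its entries by $n$ and embed the result in the middle $m$ rows of an $(m+2) \times n$ grid, then adjoin a new top and bottom row using the remaining $2n$ integers $\{0, 1, \dots, n-1\} \cup \{mn + n, \dots, mn + 2n - 1\}$, arranging them so each column receives a new complementary pair of sum $mn + 2n - 1$ and the two new row sums each equal the target $n((m+2)n - 1)/2$; a symmetric column-bordering step yields $MR(m, n+2)$.

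The main obstacle is verifying the odd--odd bordering step. Concretely, one must choose, for each complementary pair $P_a = \{a, mn+2n-1-a\}$ with $a \in \{0, \dots, n-1\}$, which element enters the top new row and which enters the bottom, so that the two resulting row sums are equal; column sums are automatic because every column contains exactly one pair $P_a$ added to the shifted $MR(m, n)$ column. This balancing reduces to finding a subset $S \subseteq \{0, \dots, n-1\}$ with a prescribed value of $\sum_{a \in S} a$, whose existence follows from a short arithmetic identity valid for odd $n \geq 3$. Once this auxiliary lemma and its symmetric column-version are in place, the induction covers every admissible odd--odd pair $(m, n)$ with $m + n \geq 6$, and combining with the even--even block construction completes the proof.
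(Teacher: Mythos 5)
The paper does not actually prove this statement: it is quoted as a classical result, with the proof attributed to Harmuth \cite{TH1,TH2} and Sun \cite{sun}, so there is no internal argument to measure yours against. On its own terms, your necessity direction is complete and correct: forcing $R=n(mn-1)/2$ and $C=m(mn-1)/2$ to be integers does give $m\equiv n\pmod 2$, and the $1\times n$ and $2\times 2$ exclusions are handled properly.

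The sufficiency direction, however, has genuine gaps at exactly the two points where the real work of this theorem lies. In the even--even case, the $2\times 2$ block you describe, with pairs $\{a,\,mn-1-a\}$ and $\{b,\,mn-1-b\}$ on its two diagonals, has column sums $(mn-1)\pm(a-b)$ and row sums $a+b$ and $2(mn-1)-(a+b)$; neither is constant, so merely distributing $n/2$ pairs to each row and $m/2$ pairs to each column cannot suffice. Already for $MR(2,4)$ the naive block layout gives column sums $6,8,6,8$ instead of the required $7$. One must additionally arrange that the quantities $a+b$ balance along each row and the quantities $a-b$ cancel along each column, which is an auxiliary combinatorial balancing problem on the pair indices and is the heart of the construction; it is entirely absent from the proposal. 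In the odd--odd case, your reduction of the bordering step to a subset-sum condition is the right idea and the column sums are indeed automatic, but the existence of a subset $S\subseteq\{0,\dots,n-1\}$ of the prescribed cardinality $k$ with the prescribed sum is asserted rather than proved: you need to check that the required value lies in the attainable interval $\bigl[\binom{k}{2},\,kn-\binom{k+1}{2}\bigr]$ for a suitable choice of $k$ and for every odd $n\ge 3$, and likewise for the column-bordering version. As written, the proposal is a plausible plan following the standard strategy, but its two load-bearing lemmas are missing, so it does not yet constitute a proof.
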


A {\em $k$-magic square} of order $n$ is an arrangement of the numbers from 0 to $kn-1$ in an $n\times n$ array such that each row and each column has exactly $k$ filled cells, each number occurs exactly once in the array, and the sum of the entries of any row or any column is the same. The study of magic squares with empty cells was initiated in \cite{KL1}. A $k$-magic square is called $k$-{\em diagonal} if its entries all belong to $k$ consecutive diagonals (this includes broken diagonals as well).

\begin{theorem}\label{TH:KL} \cite{KL1}
There exists a $k$-diagonal magic square of order $n$ if and only if $n=k=1$ or
$3\leq k\leq n$ and either $n$ is odd or $k$ is even.
\end{theorem}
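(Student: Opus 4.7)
I would separate necessity from sufficiency and then pinpoint where the real work lies.

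\emph{Necessity.} The $n \times n$ array has exactly $n$ broken diagonals, namely the sets of cells with $j - i \equiv d \pmod{n}$, and each one is both a row- and column-transversal. Selecting $k$ of them therefore automatically places exactly $k$ filled cells in every row and in every column, and forces $k \leq n$. The magic constant must equal $M = k(kn-1)/2$, which is an integer iff $k$ is even or $n$ is odd. To rule out $k = 2$ with $n \geq 2$, I would take the two consecutive diagonals to be $0$ and $1$ (rotational symmetry); writing $a_i$ for the entry at $(i,i)$ and $b_i$ for the entry at $(i,i+1)$, the row constraint gives $a_i + b_i = M$ while the column-$j$ constraint gives $a_j + b_{j-1} = M$, so $a_j = a_{j-1}$ for every $j$, contradicting distinctness of the entries. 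The case $n = k = 1$ is trivial.

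\emph{Sufficiency, setup.} I would encode the $k$ consecutive diagonals as a $k \times n$ matrix $A$ with $A[d][i]$ the value placed at cell $(i, i + d \bmod n)$. A $k$-diagonal magic square is then precisely a bijection $A \colon [k] \times [n] \to \{0, 1, \ldots, kn-1\}$ satisfying
\[
\sum_{d=0}^{k-1} A[d][i] \;=\; M \;=\; \sum_{d=0}^{k-1} A[d][j - d \bmod n] \qquad \text{for all } i,j.
\]
Using the standard split $A[d][i] = dn + \pi_d(i)$ with each $\pi_d$ a permutation of $\{0,\ldots,n-1\}$, the entries are automatically distinct and lie in $\{0, \ldots, kn-1\}$, and both sum conditions reduce to
\[
\sum_{d=0}^{k-1} \pi_d(i) \;=\; \frac{k(n-1)}{2} \;=\; \sum_{d=0}^{k-1} \pi_d(i - d \bmod n) \qquad \text{for every } i,
\]
whose right-hand side is an integer exactly in the allowed parameter regime.

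\emph{Sufficiency, construction.} I would build the $\pi_d$'s by cases. For $k$ even, I would pair the permutations via $\pi_{d + k/2}(i) = (n - 1) - \pi_d(i)$, which trivialises the column-sum condition; the broken-diagonal condition then becomes $\sum_{d=0}^{k/2-1}[\pi_d(i-d) - \pi_d(i-d-k/2)] = 0$ for every $i$, which I would arrange by choosing each $\pi_d$ to be a cyclic shift of a common base permutation by a $d$-dependent constant tuned to telescope this sum. For $n$ odd and $k = 3$, I would exploit the unit $2 \in \mathbb{Z}_n^{\times}$ to construct three affine permutations $\pi_d(i) \equiv \alpha_d i + \beta_d \pmod{n}$ with $\sum \alpha_d$ and $\sum \alpha_d \cdot d$ both zero mod $n$, so that both linear profiles are constant; the $\beta_d$ are then solved to fix the constant at $3(n-1)/2$. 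For $n$ odd and $k \geq 5$ odd, I would append complementary pairs to the $k = 3$ base, handling the extra pairs by the even-$k$ techniques.

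\emph{Main obstacle.} The crux is the \emph{coupling} between the two sum conditions: the column condition evaluates $\pi_d$ at $i$ while the broken-diagonal condition evaluates $\pi_d$ at $i - d$, so any symmetry (most naturally, complementary pairing) that trivialises one sum leaves a shift-dependent residue in the other. Compounding this is the mismatch between integer arithmetic, required for the sums themselves, and modular arithmetic, natural for cyclic shifts and units in $\mathbb{Z}_n$: a naive affine family $\pi_d(i) \equiv \alpha_d i + \beta_d \pmod{n}$ controls the sums only at the level of congruences, and one must carefully track the number of modular wraparounds to achieve pointwise balance in both sums simultaneously. Overcoming this coupling uniformly across every admissible $(n,k)$ is the heart of the construction and the step where the parity hypotheses enter decisively.
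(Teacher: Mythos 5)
This theorem is not proved in the paper you were given: it is quoted verbatim from \cite{KL1} as background, so there is no in-house argument to measure your proposal against. Judged on its own merits, your necessity direction is essentially complete and correct: each broken diagonal is a row- and column-transversal, so $k$ consecutive diagonals force $k$ filled cells per line and $k\leq n$; the magic constant $M=k(kn-1)/2$ is an integer exactly when $k$ is even or $n$ is odd; and your elimination of $k=2$ via $a_j=a_{j-1}$ is sound. You should also dispose of $k=1$ with $n>1$ (a single transversal whose $n$ distinct entries would each have to equal the common row sum), but that is immediate.

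The sufficiency direction, however, is a plan rather than a proof, and the gap is not merely one of bookkeeping. Your reduction $A[d][i]=dn+\pi_d(i)$ correctly translates the problem into finding permutations $\pi_0,\dots,\pi_{k-1}$ of $\{0,\dots,n-1\}$ with both $\sum_d\pi_d(i)$ and $\sum_d\pi_d(i-d\bmod n)$ constant, but the mechanisms you sketch do not close. For even $k$, with the complementary pairing $\pi_{d+k/2}=(n-1)-\pi_d$ and cyclic shifts $\pi_d(x)=\sigma(x+c_d)$ of a base permutation, the telescoping you describe forces $c_{d+1}-c_d=1-k/2$ and leaves the uncancelled residue $\sigma(i+c_0)-\sigma\bigl(i+c_0-(k/2)^2\bigr)$, which vanishes for all $i$ only when $(k/2)^2\equiv 0\pmod n$ --- false in general, so this family of shifts cannot work as stated. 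For odd $n$ with affine layers $\pi_d(i)\equiv\alpha_d i+\beta_d\pmod n$, constancy of the sums modulo $n$ does not yield constancy of the integer sums; as you yourself note, the number of modular wraparounds must be balanced pointwise, and you exhibit no choice of $\alpha_d,\beta_d$ that achieves this for every admissible $(n,k)$. Since the existence half is the entire content of the theorem, the proposal does not yet constitute a proof; completing it requires either explicit permutation families verified to satisfy both sum conditions simultaneously, or a recursive composition from small verified cases in the spirit of the product constructions this paper uses for signed magic rectangles.
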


A {\em signed magic rectangle} $SMR(m,n;r, s)$ is an $m \times n$ array with entries from $X$, where
$X=\{0,\pm1,\pm2,\ldots,\pm (mr-1)/2\}$ if $mr$ is odd and $X = \{\pm1,\pm2,\ldots,\pm mr/2\}$ if $mr$ is even,
such that precisely $r$ cells in every row and $s$ cells in every column are filled,
every integer from set $X$ appears exactly once in the array and
the sum of each row and of each column is zero.
By the definition, $mr=ns$, $r\leq n$ and $s \leq m$. If $r=n$ or $s=m$, then the rectangle has no empty cell.
In the case where $m = n$, we call the array a {\em signed magic square}.
Signed magic squares represent a type of magic square where each number from the set $X$
is used exactly once.

The following two theorems can be found in \cite{KSW}.
\begin{theorem}
	\label{TH:KSW1}
	An $SMR(m,n)$ exists precisely when $m = n = 1$, or when $m = 2$ and $n \equiv 0, 3 \pmod4$, or when $n = 2$ and $m \equiv 0, 3 \pmod4$, or when $m, n > 2$.
\end{theorem}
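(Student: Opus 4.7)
For necessity I would first rule out $m = 1$ with $n \geq 2$ (and by transposition $n = 1$ with $m \geq 2$): each column of a $1 \times n$ array has a single filled cell whose value must equal zero on its own, so every entry would have to be $0$, but the entry set contains at most one zero, forcing $n = 1$. For $m = 2$, since the two entries of each column sum to zero and all absolute values are distinct, every column is a cancelling pair $\{a, -a\}$; the absolute values in row~$1$ are therefore a permutation of $\{1, \ldots, n\}$ with arbitrary signs. The row-$1$ sum is a $\pm1$-signed sum of $1 + 2 + \cdots + n$, which has the same parity as $n(n+1)/2$. For this sum to vanish one needs $4 \mid n(n+1)$, i.e., $n \equiv 0$ or $3 \pmod 4$. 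The $n = 2$ case is symmetric.

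For sufficiency, the case $m = n = 1$ is handled by the single entry $0$, and $m = 2$ with $n \equiv 0, 3 \pmod 4$ admits a direct construction: for $n = 4k$ I would use the sign pattern $(+,+,-,-)$ repeated across absolute values $1, 2, \ldots, n$ in row~$1$ and its negation in row~$2$; for $n = 4k + 3$ I would adjoin the $2 \times 3$ block $\left(\begin{smallmatrix} 1 & 2 & -3 \\ -1 & -2 & 3 \end{smallmatrix}\right)$ to a shifted $n = 4k$ block. For $m, n \geq 3$ with $mn$ odd (so both $m$ and $n$ are odd and $m + n \geq 6$), Theorem~\ref{TH:sun} supplies an $MR(m, n)$ with entries $\{0, 1, \ldots, mn - 1\}$; subtracting the central value $(mn-1)/2$ from every cell preserves row/column equality and yields an $SMR(m, n)$ with entries exactly $\{0, \pm 1, \ldots, \pm(mn-1)/2\}$ and all row and column sums equal to zero.

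The remaining range $m, n \geq 3$ with $mn$ even is where the main effort goes. Neither the shift-an-$MR$ trick (which would introduce half-integers) nor a naive $2 \times 2$ block tiling (which generically breaks column sums) works directly. I would handle this range by constructing a short list of base rectangles by hand, e.g., $SMR(3, 4), SMR(3, 6), SMR(4, 4), SMR(4, 5), SMR(4, 6), SMR(4, 7), SMR(5, 6), SMR(6, 6)$, and then extending to arbitrary admissible $(m, n)$ by horizontally or vertically concatenating two smaller $SMR$s whose entries are shifted so as to occupy the complementary absolute-value range $\{\pm(N+1), \ldots, \pm N'\}$. The zero-sum property is preserved under such shifts, because the shifted block is itself an $SMR$ on its translated range and contributes $0$ to every row or column sum it meets.

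The main obstacle is the last step: assembling enough base cases and a covering argument to reach every admissible $(m,n)$. For fixed $m$ one needs base widths $n_0$ whose $\mathbb{N}$-combinations cover all sufficiently large $n$ of the correct parity (a Frobenius-style argument), together with explicit treatment of small exceptional residues; symmetric work is then needed for fixed $n$. I expect most of the technical labor to lie in producing and verifying the small $SMR$s for mixed-parity and doubly-even parameters, after which the concatenation step is largely mechanical.
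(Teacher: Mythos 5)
The paper does not prove this statement at all: Theorem~\ref{TH:KSW1} is quoted verbatim from \cite{KSW} ("The following two theorems can be found in \cite{KSW}"), so there is no in-paper proof to compare your plan against; the real content lives in that reference. Judged on its own, your necessity argument is sound (the $m=1$ collapse to a single zero, and the parity argument showing a signed permutation of $1,\ldots,n$ sums to zero only when $n(n+1)/2$ is even, i.e.\ $n\equiv 0,3\pmod 4$), and your treatment of the odd case $m,n\ge 3$ with $mn$ odd --- take an $MR(m,n)$ from Theorem~\ref{TH:sun} and subtract $(mn-1)/2$ from every cell --- is correct and clean.

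Two problems remain, one small and one large. The small one: your sign pattern $(+,+,-,-)$ for $m=2$, $n=4k$ does not work; the block $(4j+1)+(4j+2)-(4j+3)-(4j+4)$ equals $-4$, so the row sum is $-n$, not $0$. You want $(+,-,-,+)$, as in the paper's Figure~\ref{2,4;4,2}, where each block of four consecutive integers cancels exactly. The large one: the case $m,n\ge 3$ with $mn$ even is exactly where the theorem is hard, and your proposal stops at a plan --- a list of base rectangles you would construct "by hand" and a Frobenius-style covering argument you would then run. Neither the base cases nor the covering argument is actually exhibited, and the concatenation step itself is not automatic: gluing two rectangles side by side adds filled cells to every row, so one must either stack blocks diagonally (as in Theorem~\ref{TH:km+m',kn+n'}) or track carefully which rows and columns each shifted block meets, and the shifted block contributes zero to a row sum only if it meets that row in a full zero-sum set of cells. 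Until those constructions are written down and verified, the dominant case of the theorem is unproved, so the proposal as it stands has a genuine gap rather than merely a different route.
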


In \cite{KSW} the notation $SMS(n;k)$ is used for
a signed magic square with $k$ filled cells in each row and
$k$ filled cells in each column.

\begin{theorem}\label{TH:KSW2}
	There exists an $SMS(n;k)$ precisely when $n=k = 1$ or $3\leq k\leq n$.
\end{theorem}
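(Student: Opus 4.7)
The argument splits into necessity and sufficiency.

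\textbf{Necessity.} The bound $k \le n$ is built into the definition of an $SMR$ (since $s \le m$). For $k = 1$, the single filled entry of any row must equal $0$ in order to sum to zero; as $0$ appears at most once in $X$, this forces $n = 1$. For $k = 2$, the two filled entries in any row are distinct and sum to zero, so they form a pair $\{a_i,-a_i\}$ with $a_i > 0$. Since every element of $X = \{\pm 1,\dots,\pm n\}$ appears exactly once, $\{a_1,\dots,a_n\} = \{1,\dots,n\}$. But any column also has two filled cells summing to zero, drawn from two \emph{different} rows $i_1 \ne i_2$; their sum can vanish only if $a_{i_1} = a_{i_2}$, contradicting distinctness. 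Hence no $SMS(n;2)$ exists for $n \ge 2$.

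\textbf{Sufficiency.} For each $(n,k)$ with $3 \le k \le n$ the goal is to exhibit an explicit $SMS(n;k)$. When $k = n$ there are no empty cells, so the square is precisely an $SMR(n,n)$, whose existence (for $n \ge 3$) is guaranteed by Theorem \ref{TH:KSW1}. For $3 \le k < n$, my plan is to imitate the diagonal layout of Theorem \ref{TH:KL}: designate $k$ cyclically consecutive diagonals of the $n\times n$ grid as the filled positions, which automatically places exactly $k$ filled cells in every row and every column. What remains is to assign the $nk$ values of $X$ to these cells so that every row sum and every column sum is zero.

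I would carry out the assignment by partitioning $X$ into $n$ \emph{row blocks} of size $k$, each summing to zero, and placing the $i$th block along the filled cells of row $i$. Such zero-sum partitions are straightforward to build by pairing $j$ with $-j$, aggregating these pairs into blocks of size $k$, and making a compensating adjustment when $k$ is odd (when $nk$ is odd the singleton $\{0\}$ is absorbed into one block via a zero-sum triple). Column balance is then imposed by arranging the entries within each block through a cyclic shift along the diagonal pattern whose offset is chosen to satisfy a small system of linear congruences modulo $n$.

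\textbf{Main obstacle.} The chief difficulty is ensuring that a zero-sum block partition and a cyclically compatible placement coexist in every parity case. When $nk$ is odd, the entry $0$ makes the block decomposition fragile, and when $k\in\{3,4\}$ there is very little room to re-balance the columns. I therefore expect the proof to rely on explicit base constructions for $k = 3$ and $k = 4$, together with a ``$k\to k+2$'' inductive step that appends two additional filled diagonals populated by carefully chosen $\pm$ pairs preserving all row and column sums; combined with the endpoint case $k = n$ supplied by Theorem \ref{TH:KSW1}, this covers every admissible pair $(n,k)$.
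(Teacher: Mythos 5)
This theorem is not proved in the paper at all; it is quoted from \cite{KSW}, so there is no in-paper argument to compare your attempt against. Judged on its own terms, your necessity argument is sound: $k\le n$ comes from the definition, the $k=1$ case correctly forces $n=1$ via the unique (or absent) zero, and the $k=2$ case is a clean contradiction --- each row's pair is $\{a_i,-a_i\}$ with the $a_i$ distinct, while a column's two filled cells lie in distinct rows and can only cancel if two of the $a_i$ coincide.

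The sufficiency direction, however, is not a proof but a plan, and the gap is exactly where you yourself locate it. For $3\le k<n$ you propose to fill $k$ cyclically consecutive diagonals and then distribute zero-sum blocks of size $k$ along the rows, fixing the columns ``through a cyclic shift \dots whose offset is chosen to satisfy a small system of linear congruences modulo $n$'' --- but you never exhibit the blocks, the shift, or the congruences, and your final paragraph concedes that you do not know the construction survives the case $nk$ odd or the cases $k\in\{3,4\}$. Getting each row sum to vanish is the easy half; the entire content of the theorem is the simultaneous vanishing of the column sums, and that is precisely the step left unverified. Note also that a $k$-diagonal support does not automatically admit a zero-column-sum labelling: Theorem \ref{TH:KL} shows that even the unsigned analogue requires parity hypotheses on $n$ and $k$, so one cannot gesture at the diagonal pattern and expect the signed case to follow. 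To close the gap you would need, at minimum, explicit base squares for the small cases and a verified ``$k\to k+2$'' augmentation (for instance by appending two diagonals filled with cancelling $\pm$ pairs, in the spirit of the shiftable-array machinery of Section \ref{SEC2}); as written, the existence half of the theorem is asserted rather than established.
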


In this paper we prove that a signed magic rectangle
$SMR(m,$ $n;r, 2)$ exists if and only if either $m=2$ and $n=r\equiv 0,3 \pmod 4$ or
$m,r\geq 3$ and $mr=2n$.

\section {Main constructions}\label{SEC2}
A rectangular array is {\em shiftable} if it contains the same number of positive entries as negative entries in every column and in every row. Figure \ref {2,4;4,2} displays a shiftable $SMR(2,4;4,2)$.
These arrays are called \textit{shiftable} because they may be shifted to use different absolute values. By increasing the absolute value of each entry by $k$, we add $k$ to each positive entry and $-k$ to each negative entry. If the number of entries in a row is $2\ell$, this means that we add $\ell k + \ell(-k) = 0$ to each row, and the same argument applies to the columns. Thus, when shifted, the array retains the same row and column sums.

\begin{figure}[ht]
$$\begin{array}{|c|c|c|c|}\hline
1&-2&-3&4\\\hline
-1&2&3&-4\\\hline
\end{array}$$
\caption{A shiftable $SMR(2,4;4,2)$}
\label{2,4;4,2}
\end{figure}

Let $A$ be an array. We write $(i,j;e)\in A$ if and only if the entry $e$ is in row $i$ and column $j$.

\begin{theorem}\label{TH:kn;kr-km,kn}
Let there exist a shiftable $SMR(m,n;r,s)$. Then for every $k\geq 1$
\begin{enumerate}
\item there exists a shiftable $SMR(m,kn;kr,s)$ and
\item there exists a shiftable $SMR(km,kn;r,s)$ .
\end{enumerate}
\end{theorem}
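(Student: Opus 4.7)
The plan is to produce both arrays by taking $k$ disjoint shifted copies of the given $SMR(m,n;r,s)$ and assembling them. Because the array is shiftable, its positive and negative entries in each row and each column match in count, so adding a constant $c$ to every positive entry and $-c$ to every negative entry leaves all row and column sums equal to zero, leaves the array shiftable, and just relabels the absolute values $\{1,2,\ldots,mr/2\}$ to $\{c+1,c+2,\ldots,c+mr/2\}$. Choosing the shifts $c_j=j\cdot mr/2$ for $j=0,1,\ldots,k-1$ yields $k$ disjoint copies whose absolute values together partition $\{1,2,\ldots,kmr/2\}$, which is exactly the symbol set required for both target rectangles (note $mr$ is even because shiftability forces $r$ even).

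For part (1) I would place these $k$ shifted copies side by side horizontally to form an $m\times kn$ array. Each row of the result is the concatenation of the $k$ corresponding rows, so it has $kr$ filled cells and sum $k\cdot 0 = 0$. Each column of the result coincides with a single column of one of the copies, hence has $s$ filled cells and sum $0$. Shiftability of every copy gives shiftability of the whole. For part (2) I would arrange the $k$ shifted copies in a block-diagonal pattern inside a $km\times kn$ frame, leaving all off-diagonal blocks empty. Each row and each column of the $km\times kn$ array passes through exactly one block, so it inherits $r$ filled cells (respectively $s$) and sum $0$ from that block, and shiftability is again inherited block by block.

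There is no serious obstacle; the only point that deserves explicit attention is the bookkeeping that the shifted absolute values are disjoint and collectively equal $\{1,2,\ldots,kmr/2\}$, which follows because the $j$-th shift sends the block $\{1,\ldots,mr/2\}$ onto $\{j\cdot mr/2 + 1,\ldots,(j+1)\cdot mr/2\}$ and these intervals tile $\{1,\ldots,kmr/2\}$. Everything else, including $mr=ns$, $kr\le kn$, and $s\le m$, is just inherited from the hypothesis, so verification is routine.
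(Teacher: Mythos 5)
Your proposal is correct and is essentially identical to the paper's own proof: both parts use $k$ copies of the given array shifted by multiples of $mr/2$ (in absolute value), concatenated horizontally for part (1) and placed block-diagonally for part (2). No gaps; the verification details you flag are exactly the routine ones the paper also leaves to the reader.
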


\begin{proof}
Let $A$ be a shiftable $SMR(m,n;r,s)$. Note that since $A$ is shiftable, it follows that $r$ and
$s$ are both even.
Partition an empty $m\times kn$ rectangle, say $B$, into
$k$ empty rectangles of size $m\times n$, say $P_{\ell}$, where $0\leq\ell\leq k-1$. For each $(i,j;e)\in A$ we fill the cell $(i,j)$ of $P_{\ell}$ with $e+\ell(mr/2)$ if $e$ is positive or with $e-\ell(mr/2)$ if $e$ is negative. The resulting rectangle is a shiftable $SMR(m,kn;kr,s)$. For an example see Figure \ref{2,12;12,2}.

We now prove that there exists a shiftable $SMR(km,kn;r,s)$ for $k\geq 1$.
Partition an empty $km\times kn$ rectangle, say $C$, into
$k^2$ empty rectangles of size $m\times n$, say $P_{a,b}$, where $0\leq a, b\leq k-1$. For each $(i,j;e)\in A$ we fill the cell $(i,j)$ of $P_{a,a}$ with $e+a(mr/2)$ if $e$ is positive or with $e-a(mr/2)$  if $e$ is negative for $0\leq a\leq k-1$. The resulting rectangle is a shiftable $SMR(km,kn;r,s)$. For an example see Figure \ref{6,12;4,2}.
\end{proof}

\begin{figure}[ht]
$$\begin{array}{|c|c|c|c||c|c|c|c||c|c|c|c|}\hline
1&-2&-3&4&5&-6&-7&8&9&-10&-11&12\\\hline
-1&2&3&-4&-5&6&7&-8&-9&10&11&-12\\\hline
\end{array}$$
\caption{A shiftable $SMR(2,12;12,2)$}
		\label{2,12;12,2}
\end{figure}

\begin{figure}[ht]
$$\begin{array}{|c|c|c|c||c|c|c|c||c|c|c|c|}\hline
1&-2&-3&4&&&&&&&&\\\hline
-1&2&3&-4&&&&&&&&\\\hline\hline
&&&&5&-6&-7&8&&&&\\\hline
&&&&-5&6&7&-8&&&&\\\hline\hline
&&&&&&&&9&-10&-11&12\\\hline
&&&&&&&&-9&10&11&-1
2\\\hline
\end{array}$$
\caption{A shiftable $SMR(6,12;4,2)$}
		\label{6,12;4,2}
\end{figure}

\begin{theorem}\label{TH:kn+n';kr+r'}
Let there exist a shiftable $SMR(m,n;r,s)$
and a (shiftable) $SMR(m,n';r',s)$ with $mr'$ even.
Then there exists a (shiftable) $SMR(m,kn+n';kr+r',s)$ for $k\geq 1$.
\end{theorem}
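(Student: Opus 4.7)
The plan is to reduce the problem to Theorem \ref{TH:kn;kr-km,kn} by first inflating the shiftable ingredient and then placing the non-shiftable (or shiftable) ingredient next to it after a suitable shift. Let $A$ denote the given shiftable $SMR(m,n;r,s)$ and $B$ the given $SMR(m,n';r',s)$. Since $A$ is shiftable, $r$ is even, so in particular $mr$ is even, and by hypothesis $mr'$ is even as well; hence the entry sets of $A$ and $B$ are $\{\pm 1,\ldots,\pm mr/2\}$ and $\{\pm 1,\ldots,\pm mr'/2\}$ respectively.

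First I would apply Theorem \ref{TH:kn;kr-km,kn}(1) to $A$, obtaining a shiftable $SMR(m,kn;kr,s)$, call it $A_k$, whose entries are $\{\pm 1,\ldots,\pm kmr/2\}$. Next I would shift $A_k$ by $mr'/2$: replace every positive entry $e$ by $e+mr'/2$ and every negative entry $e$ by $e-mr'/2$. Call the resulting array $A_k'$. Because $A_k$ is shiftable, this shift preserves all row sums and column sums (they remain $0$), and the entries of $A_k'$ are exactly $\{\pm(mr'/2+1),\ldots,\pm m(kr+r')/2\}$.

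Then I would form the $m\times(kn+n')$ array $C$ by concatenating $A_k'$ and $B$ horizontally. Each row of $C$ contains $kr$ filled cells from $A_k'$ and $r'$ from $B$, totalling $kr+r'$; each column of $C$ contains exactly $s$ filled cells (inherited from whichever of $A_k'$ or $B$ it came from); the row and column sums are all zero since both halves independently sum to zero; and the entry set is
\[
\{\pm 1,\ldots,\pm mr'/2\}\,\cup\,\{\pm(mr'/2+1),\ldots,\pm m(kr+r')/2\}=\{\pm 1,\ldots,\pm m(kr+r')/2\},
\]
which is the correct set because $m(kr+r')$ is even. Thus $C$ is an $SMR(m,kn+n';kr+r',s)$, and if $B$ happens to be shiftable then every column and row of $C$ has equally many positive and negative entries, so $C$ is itself shiftable.

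There is no real obstacle here beyond bookkeeping; the only delicate point is that the shift of $A_k$ must be by exactly $mr'/2$ so that the two ranges of absolute values abut perfectly and the combined multiset of entries is precisely $\{\pm 1,\ldots,\pm m(kr+r')/2\}$ with no gap and no repetition. The hypothesis that $mr'$ is even is exactly what makes this shift an integer shift and guarantees that $B$ avoids the value $0$, so that the two entry sets are disjoint.
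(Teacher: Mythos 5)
Your proposal is correct and follows essentially the same route as the paper: inflate the shiftable array via Part~1 of Theorem~\ref{TH:kn;kr-km,kn}, shift its entries by $mr'/2$, and concatenate with the second array, checking that the entry ranges abut exactly. The bookkeeping about parity and the role of the hypothesis that $mr'$ is even is handled correctly.
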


\begin{proof}

Apply Part 1 of Theorem \ref{TH:kn;kr-km,kn} with a shiftable $SMR(m,n;r,s)$ to obtain a shiftable $SMR(m,kn;kr,s)$, say $A$, for $k\geq 1$.
Let $B$ be a (shiftable) $SMR(m,n';r',s)$ and
let $C$ be the $m\times kn$ rectangle obtained from $A$
by adding $mr'/2$ to each positive entry of $A$ and subtracting $mr'/2$ from each negative entry of $A$.
Finally, let $D$ be the $m\times (kn+n')$ rectangle obtained from $B$ and $C$ as follows: if $(i,j;e)\in B$, then $(i,j;e)\in D$ and if $(i,j;e)\in C$, then $(i,j+n';e)\in D$.
It is easy to see that $D$ is a (shiftable) $SMR(m,kn+n';kr+r',s)$.
\end{proof}

Figure \ref{2,11;11,2} displays an $SMR(2,11;11,2)$ obtained by the construction given in the proof of Theorem \ref{TH:kn+n';kr+r'} using the shiftable $SMR(2,4;4,2)$ given in Figure \ref{2,4;4,2}, an
$SMR(2,3;3,2)$ and $k=2$.

\begin{figure}[ht]
$$\begin{array}{|c|c|c||c|c|c|c||c|c|c|c|}\hline
-1&-2&3&-4&5&6&-7&-8&9&10&-11 \\ \hline
1&2&-3&4&-5&-6&7&8&-9&-10&11 \\ \hline
\end{array}$$
\caption{An $SMR(2,11;11,2)$}
		\label{2,11;11,2}
\end{figure}

\begin{theorem}\label{TH:km+m',kn+n'}
Let there exist a shiftable $SMR(m,n;r,s)$ and a (shiftable) $SMR(m',n';r,s)$ with $m'r$ even., then there exists a (shiftable) $SMR(km+m',kn+n';r,s)$ for $k\geq 1$.
\end{theorem}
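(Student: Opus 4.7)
The plan is to imitate the block construction used in Theorem \ref{TH:kn+n';kr+r'}, but arranging the two ingredients as a block \emph{diagonal} (rather than side by side), so that the row count becomes $km+m'$ and the column count becomes $kn+n'$ simultaneously while $r$ and $s$ are unchanged.

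First I would apply Part~2 of Theorem \ref{TH:kn;kr-km,kn} to the shiftable $SMR(m,n;r,s)$ to produce a shiftable $SMR(km,kn;r,s)$; call this array $A$. Its entries form the set $\{\pm 1,\pm 2,\ldots,\pm kmr/2\}$, which is meaningful since shiftability of the original array forces $r$ (and $s$) to be even, so in particular $mr$ is even. Next, let $C$ be obtained from $A$ by increasing the absolute value of every entry by $m'r/2$; because $A$ is shiftable, $C$ still has row sums and column sums all equal to zero. Its entries are $\{\pm(m'r/2+1),\pm(m'r/2+2),\ldots,\pm(m'r/2+kmr/2)\}$, and the hypothesis that $m'r$ is even guarantees these are integers.

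Let $B$ be the given $(\,\text{shiftable}\,)\ SMR(m',n';r,s)$, whose entries form $\{\pm 1,\ldots,\pm m'r/2\}$. I would then define $D$ to be the $(km+m')\times(kn+n')$ array obtained by placing $B$ in the top-left $m'\times n'$ block and $C$ in the bottom-right $km\times kn$ block, leaving all other cells empty. Formally, if $(i,j;e)\in B$ then $(i,j;e)\in D$, and if $(i,j;e)\in C$ then $(i+m',\,j+n';\,e)\in D$.

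It then remains to verify the four defining properties of $D$. Every row and every column of $D$ lies entirely inside one of the two blocks, so the filled-cell counts per row/column are inherited directly from $B$ and $C$, giving exactly $r$ and $s$ respectively; the same observation, combined with the fact that shifting preserves row and column sums in a shiftable array, shows that every row and column of $D$ sums to zero. The entry sets of $B$ and $C$ are disjoint and their union is precisely $\{\pm 1,\pm 2,\ldots,\pm(km+m')r/2\}$, which is the correct alphabet since $(km+m')r = k(mr)+m'r$ is even. If $B$ is in addition shiftable, then both blocks are shiftable, so $D$ is too. The only subtlety worth flagging is the parity bookkeeping for the shift amount $m'r/2$ and for the overall alphabet; once the assumption $m'r$ even is in play, everything lines up, and there is no real obstacle beyond that routine check.
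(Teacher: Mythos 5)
Your proposal is correct and follows essentially the same route as the paper's own proof: apply Part~2 of Theorem~\ref{TH:kn;kr-km,kn} to get a shiftable $SMR(km,kn;r,s)$, shift its entries by $m'r/2$ in absolute value, and place the result block-diagonally with the given $SMR(m',n';r,s)$. Your added bookkeeping on the entry sets and parity is a welcome (and slightly more careful) elaboration of what the paper leaves as ``easy to see.''
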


\begin{proof}
Apply Part 2 of Theorem \ref{TH:kn;kr-km,kn} with a shiftable $SMR(m,n;$ $r,s)$ to obtain a shiftable $SMR(km,kn;r,s)$, say $A$, for $k\geq 1$.
Let $B$ be a (shiftable) $SMR(m',n';r,s)$ and
let $C$ be the $m\times kn$ rectangle obtained from $A$
by adding $m'r/2$ to each positive entry of $A$ and subtracting $m'r/2$ from each negative entry of $A$.
Finally, let $D$ be the $(km+m')\times (kn+n')$ rectangle obtained from $B$ and $C$ as follows: if $(i,j;e)\in B$, then $(i,j;e)\in D$ and if $(i,j;e)\in C$, then $(i+m',j+n';e)\in D$.
It is easy to see that $D$ is a (shiftable) $SMR(km+m',kn+n';r,s)$.
\end{proof}

Figure \ref{7,14;4,2} displays a shiftable $SMR(7,14;4,2)$ obtained by the construction given in the proof of Theorem \ref{TH:km+m',kn+n'} using the shiftable $SMR(2,4;4,2)$ given in Figure \ref{2,4;4,2},
the shiftable $SMR(3,6;4,2)$ given in Figure  \ref{3,6;4,2}, and $k=2$.

\begin{figure}[ht]
$$\begin{array}{|c@{\hspace{0.3mm}}|c@{\hspace{0.3mm}}|c@{\hspace{0.3mm}}
|c@{\hspace{0.3mm}}|c@{\hspace{0.3mm}}|c@{\hspace{0.3mm}}
|c@{\hspace{0.3mm}}|c@{\hspace{0.3mm}}|c@{\hspace{0.3mm}}
|c@{\hspace{0.3mm}}|c@{\hspace{0.3mm}}|c@{\hspace{0.3mm}}
|c@{\hspace{0.3mm}}|c@{\hspace{0.3mm}}|} \hline
1&&-3&-4&&6&&&&&&&& \\ \hline
-1&2&&4&-5&&&&&&&&& \\ \hline
&-2&3&&5&-6&&&&&&&& \\ \hline\hline
&&&&&&-7&8&9&-10&&&& \\ \hline
&&&&&&7&-8&-9&10&&&& \\ \hline\hline
&&&&&&&&&&-11&12&13&-14 \\ \hline
&&&&&&&&&&11&-12&-13&14 \\ \hline

\end{array}$$
\caption{A shiftable $SMR(7,14;4,2)$}
		\label{7,14;4,2}
\end{figure}

\section {The existence of an $SMR(m,3m/2;3,$ $2)$ and an $SMR(m,5m/2;5,2)$}\label{SEC3}
In this section we present direct constructions for the existence of an $SMR(m,3m/2;3,2)$, where $m\geq 2$ and even,  and an $SMR(m,5m/2;5,2)$, where $m\geq 4$ and even. We will make use of these results in Section \ref{SEC4}.
Note that if $m$ is odd there is no $SMR(m,3m/2;3,$ $2)$ because $3m$ is odd and there is no $SMR(m,5m/2;5,2)$ because $5m$ is odd.

\begin{proposition}\label{m,3m/2;3,2}
There exists an $SMR(m,3m/2;3,2)$ for $m$ even and $m\geq 2$.
\end{proposition}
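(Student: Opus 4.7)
The plan is a direct construction. The key structural observation is the following reduction: since each column has exactly two filled cells summing to zero, each column must contain a pair $\{k,-k\}$ for some $k$; and because there are $3m/2$ columns and exactly $3m/2$ such sign pairs in the allowed entry set $\{\pm 1,\ldots,\pm 3m/2\}$, each pair occupies one and only one column. The row condition (three filled cells summing to zero) then forces, in every row, the three absolute values $(a,b,c)$ to satisfy $a+b=c$ --- a Schur triple --- with signs either $(+a,+b,-c)$ or $(-a,-b,+c)$. Thus constructing an $SMR(m,3m/2;3,2)$ is equivalent to (i) selecting $m$ Schur triples in $\{1,\ldots,3m/2\}$ such that each integer lies in exactly two triples, and (ii) assigning a polarity to each triple so that every integer appears once positive and once negative.

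The base case $m=2$ is handled by taking the single triple $(1,2,3)$ twice with opposite polarities, yielding the $2\times 3$ array with rows $(1,2,-3)$ and $(-1,-2,3)$. For general even $m=2t\geq 4$, I would exhibit an explicit family of $m$ Schur triples covering each of $\{1,\ldots,3t\}$ exactly twice, most naturally by an arithmetic-progression template depending on $m \bmod 4$ (treating the cases $m\equiv 0$ and $m\equiv 2$ separately). Once the triples are fixed, the polarity assignment in (ii) becomes a $2$-coloring problem on an auxiliary graph whose vertices are the $m$ triples and whose edges are the shared magnitudes with a prescribed ``same'' or ``different'' label determined by whether the shared magnitude plays the role of a small summand or the large summand in each triple; the choice of triples in step (i) will be made so that this coloring is consistent.

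The main obstacle is the combinatorial design in step (i). The largest elements of $\{1,\ldots,3m/2\}$ participate in very few Schur triples --- for instance, $3m/2$ only occurs as the sum $c$ of pairs $(a,3m/2-a)$ with $1\leq a<3m/2-a$ --- so these elements severely constrain the construction and essentially force the near-boundary triples. I expect the smoothest route is to write down the triples and their signed entries row by row according to a closed-form pattern, then verify by direct calculation that every row sums to zero, every column contains a complementary $\pm$ pair, and every magnitude in $\{1,\ldots,3m/2\}$ is used exactly once with each sign. Composition with the shiftability-based results of Section~\ref{SEC2} is not available here, because a row with an odd number of filled cells cannot lie in a shiftable rectangle, so a genuinely direct construction is unavoidable.
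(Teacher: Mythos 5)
Your structural analysis is correct and matches the paper's implicit strategy exactly: every column of an $SMR(m,3m/2;3,2)$ must hold a complementary pair $\{k,-k\}$, each of the $3m/2$ pairs fills exactly one column, and each row is a signed Schur triple $\pm(a,b,-c)$ with $a+b=c$. The paper's proof is precisely a realization of your steps (i) and (ii): it builds an $m\times 3$ array of signed triples and then transplants each entry $k$ into column $|k|$ of the $m\times(3m/2)$ rectangle. Your remark that shiftable compositions are unavailable here (odd row length) is also correct and explains why the paper resorts to a direct construction.

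The problem is that your proposal stops exactly where the content of the proposition begins. The entire burden of the statement is the explicit family of $m$ Schur triples covering each of $1,\ldots,3m/2$ twice together with a consistent polarity assignment, and you leave this at ``I would exhibit\ldots'' and ``I expect the smoothest route is\ldots''. That is a plan, not a proof; the existence of such a family is not obvious and is the whole point. For the record, a single closed form works for all even $m$, with no case split on $m\bmod 4$: for $1\leq i\leq m/2$ take row $i$ to be $\bigl(i,\;(3m/2)-2i+1,\;-(3m/2)+i-1\bigr)$ (polarity $(+,+,-)$, triple $i+(3m/2-2i+1)=3m/2-i+1$), and for $1\leq j\leq m/2$ take row $(m/2)+j$ to be $\bigl(-j,\;-(j+m/2),\;2j+m/2\bigr)$ (polarity $(-,-,+)$). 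One then checks that the positive entries are $\{1,\ldots,m/2\}$ together with the two interleaved arithmetic progressions $\{m/2+1,m/2+3,\ldots,3m/2-1\}$ and $\{m/2+2,m/2+4,\ldots,3m/2\}$, hence all of $\{1,\ldots,3m/2\}$, and symmetrically for the negatives; that each row sums to zero; and that no row contains both $k$ and $-k$ (which would force a repeated column). Your auxiliary two-coloring formulation of step (ii) is a reasonable way to organize the search, but until a concrete family of triples is written down and verified, the proposition is not proved. Also, your suggestion to split the construction by $m\bmod 4$ is unnecessary for $r=3$ (that complication only arises in the analogous $r=5$ construction).
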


\begin{proof}
Define an $m\times 3$ rectangle $A$ as follows.

\noindent Column 1: $ \left\{\begin{array}{l}
(i,1;i)\in A \mbox { for } 1\leq i\leq m/2,\\
(i,1;(m/2)-i)\in A  \mbox { for } (m/2)+1\leq i\leq m.\\
\end{array}\right.$

\noindent Column 2: $ \left\{\begin{array}{lll}
(i,2;(3m/2)-2i+1)\in A \mbox { for } 1\leq i\leq m/2,\\
(i,2;-i)\in A  \mbox { for } (m/2)+1\leq i\leq m.\\
\end{array}\right.$

\noindent Column 3: $ \left\{\begin{array}{l}
(i,3;(-3m/2)+i-1)\in A \mbox { for } 1\leq i\leq m/2,\\
(i,3;(-m/2)+2i)\in A  \mbox { for } (m/2)+1\leq i\leq m.\\
\end{array}\right.$

\noindent By construction, it is easy to see that the entries in $A$ consist of
$\{\pm1, \pm 2, \ldots, \pm 3m/2\}$, which are the numbers in an $SMR(m,$ $3m/2;3,2)$.
Figure \ref{A.m=8.3.2} displays the rectangle $A$ when $m=8, 10$.
We now prove that the sum of each row of $A$ is zero. The row sum for row $i$ of $A$, where $1\leq i\leq m/2$, is
$$  i+ ((3m/2)-2i+1)+ ((-3m/2)+i-1)=0.$$
Similarly, the row sum for row $i$ of $A$, where $(m/2)+1\leq i\leq m$, is
$$ ((m/2)-i)+(-i)+((-m/2)+2i) =0.$$

Let $a,k$ and $-k$ be the numbers in a row of $A$. Then $a+k+(-k)=0$, which implies that $a=0$. Since zero does not appear in $A$, it follows that the numbers $k$ and $-k$ do not appear in the same row of $A$.

Now let $B$ be an empty $m\times 3m/2$ rectangle. For each $(i,j;k)\in A$ let $(i,|k|;k)\in B$.
By construction, the numbers in row $i$ of $B$ are precisely the numbers in row $i$ of $A$.
Therefore the row sum for each row of $B$ is also zero.
Since $\pm k$ are entries of $A$ for each $1\leq k\leq 3m/2$, it follows that column $k$ of $B$ contains only $k$ and $-k$. Hence, $B$ is an $SMR(m,3m/2;3,2)$ for $m$ even and $m\geq 2$.
\end{proof}

Figure \ref{8,12;3,2} displays an $SMR(8,12;3,2)$ obtained by the construction given in Proposition \ref{m,3m/2;3,2}.

\begin{figure}[ht]
$$\begin{array}{ccc}
\begin{array}{|c|c|c|}\hline
1&11&-12 \\ \hline
2&9&-11 \\ \hline
3&7&-10 \\ \hline
4&5&-9 \\ \hline
-1&-5&6 \\ \hline
-2&-6&8 \\ \hline
-3&-7&10 \\ \hline
-4&-8&12 \\ \hline
\end{array}&&
\begin{array}{|c|c|c|}\hline
1&14&-15 \\ \hline
2&12&-14 \\ \hline
3&10&-13 \\ \hline
4&8&-12 \\ \hline
5&6&-11 \\ \hline
-1&-6&7 \\ \hline
-2&-7&9 \\ \hline
-3&-8&11 \\ \hline
-4&-9&13 \\ \hline
-5&-10&15 \\ \hline
\end{array}\\
\mbox{ Array } A \mbox { when } m=8&&\mbox{ Array } A \mbox { when } m=10\\
\end{array}
$$
\caption{Array $A$ given in Proposition \ref{m,3m/2;3,2}}
		\label{A.m=8.3.2}
\end{figure}

\begin{figure}[ht]
$$\begin{array}{|c@{\hspace{1.0mm}}|c@{\hspace{1.0mm}}|c@{\hspace{1.0mm}}
|c@{\hspace{1.0mm}}||c@{\hspace{1.0mm}}|c@{\hspace{1.0mm}}
|c@{\hspace{1.0mm}}|c@{\hspace{1.0mm}}||c@{\hspace{1.0mm}}
|c@{\hspace{1.0mm}}|c@{\hspace{1.0mm}}|c@{\hspace{1.0mm}}|} \hline
1&&&&&&&&&&11&-12 \\ \hline
&2&&&&&&&9&&-11&  \\ \hline
&&3&&&&7&&&-10&&    \\ \hline
&&&4&5&&&&-9&&&    \\ \hline
-1&&&&-5&6&&&&&&  \\ \hline
&-2&&&&-6&&8&&&&  \\ \hline
&&-3&&&&-7&&&10&&  \\ \hline
&&&-4&&&&-8&&&&12  \\ \hline
\end{array}$$
\caption{An $SMR(8,12;3,2)$}
		\label{8,12;3,2}
\end{figure}

It is an easy exercise to see that there is no $SMR(2,5;5,2)$.  The following proposition
shows how to build an $SMR(m,5m/2;5,2)$ for $m$ even and $m\geq 4$.

\begin{proposition}\label{m,5m/2;5,2}
There exists an $SMR(m,5m/2;5,2)$ for $m$ even and $m\geq 4$.
\end{proposition}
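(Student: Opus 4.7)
The plan is to imitate the template used for Proposition~\ref{m,3m/2;3,2}: first build an auxiliary $m \times 5$ array $A$ whose entries are precisely the set $\{\pm 1,\pm 2,\ldots,\pm 5m/2\}$ (each appearing once), such that the sum of each row of $A$ is zero and no row of $A$ contains both $k$ and $-k$; then define an $m \times 5m/2$ array $B$ by the rule $(i,|k|;k)\in B$ whenever $(i,j;k)\in A$. Because for every $k\in\{1,\ldots,5m/2\}$ the two entries $k$ and $-k$ lie in different rows of $A$ but the same column $k$ of $B$, column $k$ of $B$ has exactly two filled cells summing to zero; and since the multiset of entries of row $i$ of $B$ equals that of row $i$ of $A$, the row sums of $B$ are zero as well. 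This shows $B$ is an $SMR(m,5m/2;5,2)$, reducing the proof to producing $A$.

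The heart of the argument is therefore the explicit description of the five columns of $A$. Following the pattern of Proposition~\ref{m,3m/2;3,2}, I would split the rows into the top half $1\le i\le m/2$ and bottom half $m/2{+}1\le i\le m$, and in each half assign an arithmetic-progression formula in $i$ to each of the five columns. Column~1 should receive the small absolute values $\{\pm 1,\ldots,\pm m/2\}$, with $+i$ placed in the top half and a shifted copy like $(m/2){-}i$ in the bottom half; columns~2--4 should carry the medium absolute values via linear-in-$i$ formulas of slope $\pm 1$ or $\pm 2$, chosen so that the three used intervals tile $\{m/2+1,\ldots,5m/2\}$ disjointly; column~5 is forced on each row to make the row sum zero, and the formulas in columns~2--4 must be calibrated so that the required column~5 entries span exactly the remaining absolute values with correctly distributed signs. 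Once the five column formulas are written down, the verification splits into the same three bookkeeping checks as before: (a) the five columns' absolute values partition $\{1,\ldots,5m/2\}$; (b) for every $k$ both $+k$ and $-k$ appear, and in different rows; and (c) the top-half and bottom-half row sums each collapse to zero.

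The main obstacle I anticipate is parity and divisibility bookkeeping. With five columns, one more than in the $3$-column case, there is an extra degree of freedom that must be spent on keeping the column-absolute-value sets disjoint. The cutoff $5m/2$ is an integer only because $m$ is even, but the tiling of $\{m/2+1,\ldots,5m/2\}$ by three arithmetic progressions of length $m$ (one for each of columns~2, 3, 4) seems to behave differently according to $m\bmod 4$: in one residue class the natural slope-$2$ progressions land on the correct parities, while in the other a half-integer shift or a small perturbation of one column is needed. I therefore expect the proof either to give a single formula with occasional case distinctions on $m\bmod 4$, or to treat $m=4$ (and possibly $m=6$) as base cases by hand and give a uniform construction for $m\ge 8$. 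Once that is navigated, the three verification steps above are routine substitutions, exactly analogous to the computation $i+((3m/2)-2i+1)+((-3m/2)+i-1)=0$ that closed Proposition~\ref{m,3m/2;3,2}.
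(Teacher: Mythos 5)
Your reduction step is correct and is exactly the paper's: build an $m\times 5$ array with entry set $\{\pm1,\ldots,\pm 5m/2\}$, zero row sums, and no row containing both $k$ and $-k$, then relocate each entry $k$ to column $|k|$ of an $m\times 5m/2$ array. But the proposal stops short of the actual content of the proposition. You explicitly defer ``the heart of the argument'' --- the five column formulas --- to a description of what properties they ought to have, and never exhibit formulas satisfying (a), (b), (c). Since the whole difficulty of the proposition is precisely producing such an array for every even $m\geq 4$, what you have written is a plan for a proof, not a proof: there is no way to verify from it that the required array exists.

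There is also a substantive point you underestimate. In the three-column case the condition ``no row contains both $k$ and $-k$'' comes for free from the zero row sum ($a+k-k=0$ forces $a=0$, impossible). With five columns this is no longer automatic, and in the paper's natural construction it genuinely fails: for $m\equiv 2\pmod 4$ the entries in columns $4$ and $5$ of rows $i=(m+2)/4$ and $i=(3m+2)/4$ sum to zero, so the paper must define a repaired array $C'$ by swapping eight specific entries between those rows before the relocation step can be applied. You correctly sense that a case distinction on $m\bmod 4$ is coming, but you attribute it to parity bookkeeping in tiling $\{m/2+1,\ldots,5m/2\}$ by arithmetic progressions (which in fact works uniformly), rather than to the $\pm k$-in-one-row obstruction, and you give no mechanism for resolving it. To complete the proof you would need to write down the five column formulas, verify the three checks by direct substitution, identify the offending rows when $m\equiv 2\pmod 4$, and exhibit the explicit entry swaps that fix them.
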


\begin{proof}
Define an $m\times 5$ rectangle $C$ as follows.

\noindent Column 1: $ \left\{\begin{array}{l}
(i,1;i)\in C \mbox { for } 1\leq i\leq m/2,\\
(i,1;(m/2)-i)\in C  \mbox { for } \frac{m+2}{2}\leq i\leq m.\\
\end{array}\right.$

\noindent Column 2: $ \left\{\begin{array}{l}
(i,2;(m/2)+2i-1)\in C \mbox { for } 1\leq i\leq m/2,\\
(i,2;(-3m/2)+i-1)\in C  \mbox { for } \frac{m+2}{2}\leq i\leq m.\\
\end{array}\right.$

\noindent Column 3: $ \left\{\begin{array}{l}
(i,3;(-m)-i)\in C \mbox { for } 1\leq i\leq m/2,\\
(i,3;(5m/2)-2i+2)\in C \mbox { for } \frac{m+2}{2}\leq i\leq m.\\
\end{array}\right.$

\noindent Column 4: $ \left\{\begin{array}{l}
(i,4;(-3m/2)-i)\in C \mbox { for } 1\leq i\leq (m/2),\\
(i,4;(3m/2)+i)\in C  \mbox { for } \frac{m+2}{2}\leq i\leq m.\\
\end{array}\right.$

\noindent Column 5: $ \left\{\begin{array}{l}
(i,5;2m-i+1)\in C \mbox { for } 1\leq i\leq m/2,\\
(i,5;-3m+i-1)\in C  \mbox { for } \frac{m+2}{2}\leq i\leq m.\\
\end{array}\right.$
\vspace{3mm}

\noindent By construction, the entries in $C$ consist of
$\{\pm1, \ldots, \pm 5m/2\}$, which are the numbers in an $SMR(m,5m/2;5,2)$.
Figure \ref{C.m=8.5.2} displays the rectangle $C$ when $m=8$.
We now prove that the sum of each row of $C$ is zero. The row sum for row $i$ of $C$, where $1\leq i\leq m/2$, is
$$i+ ((m/2)+2i-1)+ (-m-i) + ((-3m/2)-i) + (2m-i+1) =0.$$
Similarly, the row sum for row $i$ of $C$, where $(m/2)+1\leq i\leq m$, is
$$\begin{array}{r}
((m/2)-i) + ((-3m/2)+i-1) + (5m/2)-2i+2)\\
+ ((3m/2)+i) + (-3m+i-1)  =0.\end{array}$$

Let $a,b,c,d,e$ be the numbers in row $i$ and columns $1,2,3,4,5$ of $C$, respectively.
It is straightforward to see that if $x,y\in \{a,b,c\}$ and $z\in \{d,e\}$, then
$x+y\neq 0$ and $x+z\neq 0$. Now let $d+e=0$. If $1\leq i\leq m/2$, then
$$d+e= ((-3m/2)-i)+ (2m-i+1)= (m/2)-2i+1=0.$$
This implies that $i=(m+2)/4$.

\noindent If $(m/2)+1\leq i\leq m$, then
$$d+e=((3m/2)+i)+(-3m+i-1)=(-3m/2)+2i-1=0.$$
This implies that $i=(3m+2)/4.$

Therefore if $m\equiv 0\pmod 4$, then the numbers $k$ and $-k$ do not appear in the same row of $C$. If
$m\equiv 2 \pmod 4$ and $i\neq (m+2)/2, (3m+2)/4$, then the numbers $k$ and $-k$ do not appear in row $i$ of $C$.

When $m\equiv 2 \pmod 4$ we construct an $m\times 5$ array $C'$ by rearranging the eight entries of $C$
which are in the intersection of columns 1 and 2 with rows $(m-2)/2, (m+2)/2, (3m-2)/4$ and $(3m+2)/4$ as
follows. Switch
$$\begin{array}{l}
((m-2)/4,1;(m-2)/4) \mbox { and } (m+2)/4,1;(m+2)/4),\\
((m-2)/4,5;(7m+6)/4) \mbox{ and } ((m+2)/4,5;(7m+2)/4),\\
((3m-2)/4,1;(-m+2)/4) \mbox{ and } (3m+2)/4,1; (-m-2)/4), \\
\mbox{and }((3m-2)/4,5;(-9m-6)/4) \mbox{ and }((3m+2)/4,5;\\
(-9m-2)/4).\\
\end{array}$$
Figure \ref{C.m=8.5.2} displays the rectangle $C'$ when $m=10$.
It is easy to see that the sum of each row of $C'$ is zero and $k$ and $-k$ do not appear in any row of $C'$.

Now let $m\equiv 0 \pmod 4$, $m\geq 4$, and let $D$ be an empty $m\times 5m/2$ rectangle. For each $(i,j;k)\in C$ let $(i,|k|;k)\in D$.
By construction, the numbers in row $i$ of $D$ are precisely the numbers in row $i$ of $C$.
Therefore the row sum for each row of $D$ is also zero.
Since $\pm k$ are entries of $C$ for each $1\leq k\leq 5m/2$, it follows that column $k$ of $D$ contains only $k$ and $-k$. Hence, $D$ is an $SMR(m,5m/2;5,2)$.

Similarly, if $m\equiv 2 \pmod 4$ and $m\geq 6$, we use the array $C'$ to build an $SMR(m,5m/2;5,2)$.
\end{proof}


\begin{figure}[ht]
$$\begin{array}{ccc}
\begin{array}{|c|c|c|c|c|}\hline
1&5&-9&-13&16 \\ \hline
2&7&-10&-14&15 \\ \hline
3&9&-11&-15&14  \\ \hline
4&11&-12&-16&13 \\ \hline
-1&-8&12&17&-20 \\ \hline
-2&-7&10&18&-19 \\ \hline
-3&-6&8&19&-18 \\ \hline
-4&-5&6&20&-17 \\ \hline
\end{array}&&
\begin{array}{|c|c|c|c|c|}\hline
1&6&-11&-16&20 \\ \hline
3&8&-12&-17&18 \\ \hline
2&10&-13&-18&19 \\ \hline
4&12&-14&-19&17 \\ \hline
5&14&-15&-20&16 \\ \hline
-1&-10&15&21&-25 \\ \hline
-3&-9&13&22&-23 \\ \hline
-2&-8&11&23&-24 \\ \hline
-4&-7&9&24&-22 \\ \hline
-5&-6&7&25&-21 \\ \hline
\end{array}\\
\mbox {Array } C \mbox{ when } m=8&& \mbox{Array } C' \mbox{ when } m=10\\
\end{array}$$
\caption{Arrays $C$ and $C'$ constructed by Proposition \ref{m,5m/2;5,2}}
		\label{C.m=8.5.2}
\end{figure}


\section {The existence of an $SMR(m,n;r,2)$ with $m$ even}\label{SEC4}
Let there exist an $SMR(m,n;r,2)$. If $m=4b$ or $m=4b+2$, then $n=2br$ or $n=(2b+1)r$, respectively.
We study the existence of an $SMR(4b,2br;r,2)$  and an $SMR(4b+2,(2b+1)r;r,2)$ in the
following two subsections, respectively.

\subsection{The existence of an $SMR(4b,2br;r,2)$ }
In this subsection we construct signed magic rectangles with parameters
$(4b,8ab;4a,2)$,
$(4b,2b(4a+2);4a+2,2)$,
$(4b,2b(4a+1);4a+1,2)$, and
$(4b,2b(4a+3);4a+3,2)$, where $a,b\geq 1$.

\begin{lemma}\label{L2,4;4,2}
There exists a shiftable $SMR(2q,4pq;4p,2)$ for positive integers $p,q\geq 1$.
\end{lemma}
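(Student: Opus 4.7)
The plan is to build the required array by composing the two parts of Theorem \ref{TH:kn;kr-km,kn}, starting from the shiftable $SMR(2,4;4,2)$ exhibited in Figure \ref{2,4;4,2}. The target parameters $(2q,4pq;4p,2)$ factor as $2q = 2\cdot q$, $4pq = 4\cdot p\cdot q$, and $4p = 4\cdot p$, which suggests that the factor $p$ should enter as a horizontal replication that simultaneously enlarges the row fill, while the factor $q$ should enter as a uniform scaling of both dimensions that leaves the row and column fills alone.

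The first step is to apply Part 1 of Theorem \ref{TH:kn;kr-km,kn} with $k = p$ to the shiftable $SMR(2,4;4,2)$. Part 1 multiplies the column count by $k$ and the row fill by $k$, while preserving the row count and the column fill, so the output is a shiftable $SMR(2, 4p; 4p, 2)$. The second step is to feed this intermediate rectangle into Part 2 of Theorem \ref{TH:kn;kr-km,kn} with $k = q$. Part 2 scales both dimensions by $k$ while leaving $r$ and $s$ unchanged, so the resulting array is a shiftable $SMR(2q, 4pq; 4p, 2)$, which is exactly the rectangle required.

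There is essentially no real obstacle here; the proof is a two-line composition of constructions already in hand, and shiftability is preserved automatically at each stage by Theorem \ref{TH:kn;kr-km,kn}. The one point worth flagging is that the order of the two applications matters: Part 1 is the step that alters $r$, so the factor $p$ must be introduced before the factor $q$, since swapping the order would leave the row fill at $4$ rather than at $4p$.
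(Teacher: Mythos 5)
Your proof is correct and is exactly the paper's proof: apply Part 1 of Theorem \ref{TH:kn;kr-km,kn} with $k=p$ to the shiftable $SMR(2,4;4,2)$ to get a shiftable $SMR(2,4p;4p,2)$, then Part 2 with $k=q$. One small quibble with your closing remark: the order actually does not matter, since applying Part 2 first gives a shiftable $SMR(2q,4q;4,2)$ and then Part 1 with $k=p$ still multiplies the row fill, yielding $SMR(2q,4pq;4p,2)$ all the same.
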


\begin{proof}
Figure \ref{2,4;4,2} displays a shiftable $SMR(2,4;4,2)$. So by Part 1 of Theorem \ref{TH:kn;kr-km,kn},
there exists a shiftable $SMR(2,4p;4p,2)$ for $p\geq 1$. Now by Part 2 of Theorem \ref{TH:kn;kr-km,kn} there exists a shiftable $SMR(2q,4pq;4p,2)$ for $p,q\geq 1$.
\end{proof}

\begin{lemma}\label{4b,8ab;4a,2}
There exists a shiftable $SMR(4b,8ab;4a,2)$ for $a,$ $b\geq 1$.
\end{lemma}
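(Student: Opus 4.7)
The plan is to observe that this lemma is an immediate specialization of Lemma \ref{L2,4;4,2}. That previous lemma supplies a shiftable $SMR(2q, 4pq; 4p, 2)$ for every $p, q \geq 1$, so I would simply match parameters with the target $SMR(4b, 8ab; 4a, 2)$.

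Concretely, I would set $q = 2b$ and $p = a$. Then $2q = 4b$ gives the correct number of rows, $4p = 4a$ gives the correct row fill count, and a quick check confirms $4pq = 4 \cdot a \cdot 2b = 8ab$, which is the required number of columns. Each parameter lines up, and shiftability is inherited directly from the construction in Lemma \ref{L2,4;4,2}.

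There is essentially no obstacle here: the only thing to verify is the arithmetic identity $4pq = 8ab$ under the substitution, which is trivial. The reason this warrants a standalone lemma seems to be organizational, so that later constructions in Section \ref{SEC4} (which will piece together rectangles with different residues of $r$ modulo $4$ via Theorems \ref{TH:kn+n';kr+r'} and \ref{TH:km+m',kn+n'}) have a clean, named building block for the case $r \equiv 0 \pmod 4$.
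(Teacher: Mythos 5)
Your proof is correct and is exactly the paper's argument: apply Lemma \ref{L2,4;4,2} with $p=a$ and $q=2b$, and check that $2q=4b$, $4pq=8ab$, and $4p=4a$. Nothing further is needed.
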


\begin{proof}
Apply Lemma \ref{L2,4;4,2} with $p=a$ and $q=2b$ to obtain
a shiftable $SMR(4b,8ab;4a,2)$ for all $a,b\geq 1$.
\end{proof}


\begin{lemma}\label{4b,8ab+4b;4a+2,2}
There exists a shiftable $SMR(4b,2b(4a+2);4a+2,2)$ for $a,b\geq 1$.
\end{lemma}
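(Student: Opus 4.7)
The target array has $r=4a+2\equiv 2\pmod 4$, so every shiftable building block used in the construction must have $r$ even. However, no shiftable $SMR(m,n;2,2)$ exists: in such an array the two entries of each row would be $\pm k$ for some $k$, and the two entries of each column would be $\pm\ell$ for some $\ell$, which would force $+k$ and $-k$ to share both a row and a column, a contradiction. Hence the smallest value of $r$ usable as a shiftable building block with $s=2$ is $r=6$. The plan is to manufacture a shiftable $SMR(4b,12b;6,2)$ and then assemble the target by splicing it together with copies of the shiftable $SMR(4b,8b;4,2)$ supplied by Lemma \ref{4b,8ab;4a,2}.

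The first and only delicate step is to exhibit a shiftable $SMR(4,12;6,2)$ by direct construction. Relabelling columns, each column $j\in\{1,\dots,12\}$ may be taken to hold $+j$ and $-j$, so the problem reduces to picking, for every $j$, a pair of distinct rows to carry $+j$ and $-j$ in such a way that each of the four rows receives three positive and three negative labels whose values sum to zero. This amounts to choosing two partitions of $\{1,\dots,12\}$ into $3$-subsets $P_1,\dots,P_4$ and $N_1,\dots,N_4$ with matching row sums $\sum P_i=\sum N_i$ and $P_i\cap N_i=\emptyset$ for every $i$; for instance $P_1=\{1,6,12\}$, $P_2=\{2,7,10\}$, $P_3=\{3,8,9\}$, $P_4=\{4,5,11\}$ paired with $N_1=\{3,5,11\}$, $N_2=\{4,6,9\}$, $N_3=\{1,7,12\}$, $N_4=\{2,8,10\}$ works, and the resulting $4\times 12$ array can be recorded in an accompanying figure.

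Applying Part~2 of Theorem \ref{TH:kn;kr-km,kn} with $k=b$ to this base array yields a shiftable $SMR(4b,12b;6,2)$ for every $b\geq 1$, and since $2b(4\cdot 1+2)=12b$ this already settles the case $a=1$. For $a\geq 2$ I invoke Theorem \ref{TH:kn+n';kr+r'} with $k=a-1\geq 1$, taking the shiftable $SMR(4b,8b;4,2)$ from Lemma \ref{4b,8ab;4a,2} as the first ingredient and the shiftable $SMR(4b,12b;6,2)$ just built as the second (both have $mr$ and $mr'$ even, as required). The theorem then produces a shiftable $SMR\bigl(4b,(a-1)\cdot 8b+12b;\,(a-1)\cdot 4+6,\,2\bigr)=SMR(4b,2b(4a+2);4a+2,2)$, exactly the rectangle requested. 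The genuinely non-formal part of the argument is the explicit construction of the $r=6$ base block; everything beyond it is a mechanical application of the scaling and combination theorems already proved.
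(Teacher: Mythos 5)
Your proof is correct and follows essentially the same route as the paper: both hinge on a shiftable $SMR(4,12;6,2)$ (which the paper simply exhibits in a figure, while you derive one via the two disjoint partitions with matching block sums), inflate it to a shiftable $SMR(4b,12b;6,2)$ by Part~2 of Theorem~\ref{TH:kn;kr-km,kn}, and splice it with the shiftable $SMR(4b,8(a-1)b;4(a-1),2)$ of Lemma~\ref{4b,8ab;4a,2} using Theorem~\ref{TH:kn+n';kr+r'}. Your explicit handling of the $a=1$ case is a small improvement in rigor over the paper's statement.
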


\begin{proof}
Figure \ref{4,12;6,2} displays a shiftable $SMR(4,12;6,2)$. So by Part 2 of Theorem \ref{TH:kn;kr-km,kn},
there exists a shiftable $SMR(4b,12b;6,2)$, say $A$, for $b\geq 1$. On the other hand, by Lemma \ref{4b,8ab;4a,2}, there exists a shiftable $SMR(4b,8(a-1)b;4(a-1),2)$, say $B$, for $a\geq 2$ and $b\geq 1$.
Now apply Theorem \ref{TH:kn+n';kr+r'} with $A$ and $B$ to obtain
a shiftable $SMR(4b,2b(4a+2);4a+2,2)$ for $a,b\geq 1$.
\end{proof}

\begin{figure}[ht]
$$\begin{array}{|c@{\hspace{0.5mm}}|c@{\hspace{0.5mm}}|c@{\hspace{0.5mm}}
|c@{\hspace{0.5mm}}||c@{\hspace{0.5mm}}|c@{\hspace{0.5mm}}
|c@{\hspace{0.5mm}}|c@{\hspace{0.5mm}}||c@{\hspace{0.5mm}}
|c@{\hspace{0.5mm}}|c@{\hspace{0.5mm}}|c@{\hspace{0.5mm}}|} \hline
-1&2&&&-5&6&&&9&-11&& \\ \hline
1&-2&&&5&-6&&&-9&11&& \\ \hline
&&-3&4&&&-7&8&&&10&-12 \\ \hline
&&3&-4&&&7&-8&&&-10&12 \\ \hline
\end{array}$$
\caption{A shiftable $SMR(4,12;6,2)$}
		\label{4,12;6,2}
\end{figure}

\begin{lemma}\label{4b,8ab+2b;4a+1,2}
There exists an $SMR(4b,2b(4a+1);4a+1,2)$ for $a,b\geq 1$.
\end{lemma}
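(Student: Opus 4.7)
The plan is to decompose $r=4a+1$ as $4(a-1)+5$, splitting the target rectangle horizontally into a shiftable block with $4(a-1)$ filled cells per row and a non-shiftable block with $5$ filled cells per row, then combining them via Theorem~\ref{TH:kn+n';kr+r'}.

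I would first handle the base case $a=1$ separately. In that case the target is $SMR(4b,10b;5,2)$, which is exactly what Proposition~\ref{m,5m/2;5,2} delivers upon setting $m=4b$ (valid since $4b\geq 4$ is even).

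For $a\geq 2$, I would build the two ingredients required by Theorem~\ref{TH:kn+n';kr+r'}. First, Lemma~\ref{4b,8ab;4a,2} (with $a$ replaced by $a-1\geq 1$) provides a shiftable $SMR(4b,8(a-1)b;4(a-1),2)$; call it $A$. Second, Proposition~\ref{m,5m/2;5,2} with $m=4b$ gives an $SMR(4b,10b;5,2)$; call it $B$. Since $mr'=4b\cdot 5=20b$ is even, the hypothesis of Theorem~\ref{TH:kn+n';kr+r'} is satisfied. Applying that theorem with $k=1$ produces an
\[
SMR\bigl(4b,\, 8(a-1)b+10b;\, 4(a-1)+5,\, 2\bigr)
  = SMR\bigl(4b,\, (8a+2)b;\, 4a+1,\, 2\bigr)
  = SMR\bigl(4b,\, 2b(4a+1);\, 4a+1,\, 2\bigr),
\]
as desired.

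I do not anticipate a significant obstacle, since the construction is a direct assembly of previously established results. The only thing to check carefully is the parity condition $mr'$ even in Theorem~\ref{TH:kn+n';kr+r'}, which is immediate here, and the separation of the edge case $a=1$ where the shiftable component would degenerate (its parameter $4(a-1)=0$ is not allowed by Lemma~\ref{4b,8ab;4a,2}). The conceptual point worth highlighting in the write-up is simply the decomposition $4a+1=4(a-1)+5$, which matches precisely the "even building block" supplied by Lemma~\ref{4b,8ab;4a,2} with the "$r=5$ building block" supplied by Proposition~\ref{m,5m/2;5,2}; since $r=4a+1$ is odd, we cannot hope for a shiftable output, but Theorem~\ref{TH:kn+n';kr+r'} is designed exactly to allow the second summand to be non-shiftable.
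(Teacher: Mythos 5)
Your proof is correct and follows essentially the same route as the paper: handle $a=1$ directly via Proposition~\ref{m,5m/2;5,2}, and for $a\geq 2$ combine the shiftable $SMR(4b,8(a-1)b;4(a-1),2)$ from Lemma~\ref{4b,8ab;4a,2} with the $SMR(4b,10b;5,2)$ from Proposition~\ref{m,5m/2;5,2} using Theorem~\ref{TH:kn+n';kr+r'}. Your explicit verification of the parity hypothesis $mr'=20b$ even and the choice $k=1$ are details the paper leaves implicit.
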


\begin{proof}
By Proposition \ref{m,5m/2;5,2},
there exists an  $SMR(4b,10b;5,2)$, say $A$, for $b\geq 1$. On the other hand, by Lemma \ref{4b,8ab;4a,2}, there exists a shiftable $SMR(4b,8(a-1)b;4(a-1),2)$, say $B$, for $a\geq 2$ and $b\geq 1$.
Now apply Theorem \ref{TH:kn+n';kr+r'} with $A$ and $B$ to obtain an $SMR(4b,2b(4a+1);4a+1,2)$ for $a\geq 2$ and $b\geq 1$.
When $a=1$ we apply Proposition \ref{m,5m/2;5,2}.
\end{proof}

\begin{lemma}\label{4b,8ab+2b;4a+3,2}
There exists an $SMR(4b,2b(4a+3);4a+3,2)$ for $a,b\geq 1$.
\end{lemma}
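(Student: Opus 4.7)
The parameters decompose as $2b(4a+3) = 8ab + 6b$ and $4a+3 = 4a + 3$, which suggests combining a shiftable $r = 4a$ block of column-width $8ab$ with a single $r = 3$ block of column-width $6b$ via Theorem \ref{TH:kn+n';kr+r'} at $k = 1$. This mirrors the structure of Lemma \ref{4b,8ab+2b;4a+1,2}, with Proposition \ref{m,3m/2;3,2} playing the role that Proposition \ref{m,5m/2;5,2} played there.

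First, I would invoke Lemma \ref{4b,8ab;4a,2} to obtain a shiftable $SMR(4b, 8ab; 4a, 2)$, call it $B$; this exists for every $a, b \geq 1$.

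Second, I would invoke Proposition \ref{m,3m/2;3,2} with the even integer $m = 4b$ to obtain an $SMR(4b, 6b; 3, 2)$, call it $A$. Since $mr' = 4b \cdot 3 = 12b$ is even, the hypothesis of Theorem \ref{TH:kn+n';kr+r'} is met.

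Finally, I would apply Theorem \ref{TH:kn+n';kr+r'} to the shiftable block $B$ (with $n = 8ab$, $r = 4a$) and the block $A$ (with $n' = 6b$, $r' = 3$), taking $k = 1$. The conclusion is an $SMR(4b,\, 8ab + 6b;\, 4a + 3,\, 2) = SMR(4b,\, 2b(4a+3);\, 4a+3,\, 2)$ for every $a, b \geq 1$. The construction is essentially routine; the only items to verify are that the column width and the filled-row count match, which they do. In contrast to the previous lemma, no separate base case at $a = 1$ is needed, since the shiftable block $B$ is already non-trivial when $a = 1$; so I expect no real obstacle beyond the bookkeeping just described.
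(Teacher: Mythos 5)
Your proposal is correct and matches the paper's proof essentially verbatim: both combine the shiftable $SMR(4b,8ab;4a,2)$ from Lemma \ref{4b,8ab;4a,2} with the $SMR(4b,6b;3,2)$ from Proposition \ref{m,3m/2;3,2} via Theorem \ref{TH:kn+n';kr+r'}. Your added observation that no separate $a=1$ case is needed (since the shiftable block uses $4a$ rather than $4(a-1)$) is also consistent with the paper.
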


\begin{proof}
By Proposition \ref{m,3m/2;3,2},
there exists an  $SMR(4b,6b;3,2)$, say $A$, for $b\geq 1$. On the other hand, by Lemma \ref{4b,8ab;4a,2}, there exists a shiftable $SMR(4b,8ab;4a,2)$, say $B$, for $a, b\geq 1$.
Now apply Theorem \ref{TH:kn+n';kr+r'} with $A$ and $B$ to obtain an $SMR(4b,2b(4a+3);4a+3,2)$ for $a,b\geq 1$.
\end{proof}


\subsection{The existence of an $SMR(4b+2,(2b+1)r;r,2)$}
In this subsection we construct signed magic rectangles with parameters
$(4b+2,2a(4b+2);4a,2)$,
$(4b+2,(2a+1)(4b+2);4a+2, 2)$,
$(4b+2,(4a+1)(2b+1);4a+1,2)$, and
$(4b+2,(4a+3)(2b+1);4a+3,2)$ for all $a,b\geq 1$.

\begin{lemma}\label{2,n,n,2)}
Let $n\equiv 3 \pmod 4$. Then there exists an $SMR(2,n;$ $n,2)$.
\end{lemma}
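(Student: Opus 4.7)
The plan is to reduce the problem to a balanced set partition and then exhibit such a partition explicitly. Because $r = n$, every cell of the $2 \times n$ array is filled, and because each column has exactly two filled cells summing to zero, column $j$ must contain precisely the pair $\{+j, -j\}$ for each $j = 1, 2, \ldots, n$. The only freedom is to decide, for each $j$, whether $+j$ sits in row 1 (and $-j$ in row 2) or vice versa. Writing $T \subseteq \{1, \ldots, n\}$ for the columns with $+j$ on top and $S$ for its complement, the row 1 sum equals $\sum_{j \in T} j - \sum_{j \in S} j$, which vanishes if and only if $\sum_{j \in T} j = n(n+1)/4$. Since $n \equiv 3 \pmod 4$ guarantees $n+1 \equiv 0 \pmod 4$, the target half-sum $n(n+1)/4$ is an integer, so the problem reduces to partitioning $\{1, \ldots, n\}$ into two subsets of equal sum.

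For $n = 4k+3$ with $k \geq 0$, I would construct such a partition using the elementary identity $4i + (4i+3) = (4i+1) + (4i+2) = 8i+3$, which splits each block $\{4i, 4i+1, 4i+2, 4i+3\}$ of four consecutive integers into two pairs of equal sum. Specifically, set
\[
T = \{1,2\} \cup \bigcup_{i=1}^{k} \{4i,\, 4i+3\}, \qquad S = \{3\} \cup \bigcup_{i=1}^{k} \{4i+1,\, 4i+2\}.
\]
Both $1+2$ and $3$ equal $3$, and each pair inside the union contributes $8i+3$ to each side, so $\sum_{j \in T} j = \sum_{j \in S} j$. When $k = 0$ this reduces to the partition $\{1,2\} \cup \{3\}$, handling the base case $n = 3$.

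To complete the construction, define the array by placing $+j$ in row 1 and $-j$ in row 2 whenever $j \in T$, and $-j$ in row 1 and $+j$ in row 2 whenever $j \in S$. Every entry in $\{\pm 1, \ldots, \pm n\}$ appears exactly once, the column sums are zero by construction, and the row sums are zero by the balanced partition, so the resulting $2 \times n$ array is an $SMR(2, n; n, 2)$. The only substantive step is the existence of the balanced partition, which is immediate from the four-consecutive-integers identity once the small residue $\{1, 2, 3\}$ is handled separately; no deeper obstacle arises.
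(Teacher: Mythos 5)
Your proof is correct, and it takes a genuinely different route from the paper. The paper obtains $SMR(2,4k+3;4k+3,2)$ by composition: it first builds a shiftable $SMR(2,4k;4k,2)$ from copies of the $2\times 4$ block in Figure \ref{2,4;4,2} (Lemma \ref{L2,4;4,2}), then glues on an $SMR(2,3;3,2)$ via Theorem \ref{TH:kn+n';kr+r'}. You instead observe that any $2\times n$ array with all cells filled and zero column sums must place some pair $\{j,-j\}$ in each column (so one may normalize to $\pm j$ in column $j$), reducing existence to a partition of $\{1,\dots,n\}$ into two subsets of equal sum $n(n+1)/4$, which you exhibit directly via the identity $4i+(4i+3)=(4i+1)+(4i+2)$ together with $1+2=3$. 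Your argument is self-contained and more elementary, and it makes transparent exactly where the condition $n\equiv 3\pmod 4$ enters (integrality of $n(n+1)/4$; the $n\equiv 0$ case would follow the same way). The paper's route, while less direct here, reuses machinery (shiftable blocks and the concatenation theorems) that serves the rest of the classification; in fact the two constructions produce essentially the same sign pattern, since the paper's array in Figure \ref{2,11;11,2} also corresponds to a balanced partition built from $\{1,2,3\}$ plus blocks of four consecutive integers. One cosmetic caveat: your opening sentence phrases the pairing of $\{+j,-j\}$ into column $j$ as a necessity, which is only true up to a permutation of columns; this does not affect the existence argument.
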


\begin{proof}
By Lemma \ref{L2,4;4,2}, there exists a shiftable $SMR(2,4k;4k,2)$, say $A$, for $k\geq 1$.
Let $B$ be a $2\times 3$ array with first row $1,2,-3$ and second row $-1,-2,3$. Then $B$ is an $SMR(2,3;3,2)$. Now apply Theorem \ref{TH:kn+n';kr+r'} with $A$ and $B$ to obtain an $SMR(2,4k+3;4k+3,2)$. See Figure \ref{2,11;11,2}.
\end{proof}

\begin{lemma}\label{4b+2,8ab+4a;4a,2}
There exists a shiftable $SMR(4b+2,2a(4b+2);4a,2)$ for $a,b\geq 1$.
\end{lemma}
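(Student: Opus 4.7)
The plan is that this lemma is a direct instance of Lemma \ref{L2,4;4,2}, exactly parallel to how Lemma \ref{4b,8ab;4a,2} was obtained. Lemma \ref{L2,4;4,2} supplies a shiftable $SMR(2q,4pq;4p,2)$ for all positive integers $p,q$, so I only need to pick $p,q$ that reproduce the desired parameters $(4b+2,\,2a(4b+2);\,4a,\,2)$.

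First I would set $p=a$ and $q=2b+1$, both of which are positive integers for $a,b\geq 1$. With this choice the parameters of Lemma \ref{L2,4;4,2} become $2q=2(2b+1)=4b+2$, $4pq=4a(2b+1)=2a(4b+2)$, $4p=4a$, and $s=2$, which match the parameters required by the lemma on the nose. I would then record that the resulting array is shiftable, since Lemma \ref{L2,4;4,2} explicitly produces shiftable arrays (the construction there chains Parts 1 and 2 of Theorem \ref{TH:kn;kr-km,kn} starting from the shiftable $SMR(2,4;4,2)$ of Figure \ref{2,4;4,2}, and both parts preserve shiftability).

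Since the proof is a single substitution into an already-proved lemma, there is no real obstacle; the only thing worth verifying is the arithmetic $4a(2b+1)=2a(4b+2)$ so that the second parameter is displayed in the form the statement requires. This will be one line of calculation.
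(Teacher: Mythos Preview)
Your proof is correct and matches the paper's own argument exactly: the paper also applies Lemma~\ref{L2,4;4,2} with $p=a$ and $q=2b+1$ to obtain the desired shiftable $SMR(4b+2,2a(4b+2);4a,2)$.
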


\begin{proof}
Apply Lemma \ref{L2,4;4,2} with $p=a$ and q=$2b+1$
to obtain a shiftable $SMR(4b+2,2a(4b+2);4a,2)$ for  $a,b\geq 1$.
\end{proof}

\begin{lemma}\label{(4b+2,3(4b+2);6,2)}
There exists a shiftable $SMR(4b+2,3(4b+2);6,2)$ for $b\geq 1$
\end{lemma}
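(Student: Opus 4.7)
The plan is to handle the base case $b=1$ by a direct construction of a shiftable $SMR(6,18;6,2)$, and then bootstrap to all $b\geq 2$ by combining this base with the shiftable $SMR(4,12;6,2)$ of Figure~\ref{4,12;6,2} via Theorem~\ref{TH:km+m',kn+n'}.

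For the base, I would first build a shiftable $SMR(3,9;6,2)$ with entries $\{\pm 1,\ldots,\pm 9\}$ and then invoke Part~2 of Theorem~\ref{TH:kn;kr-km,kn} with $k=2$ to stack two copies on the diagonal (with the second shifted by $9$), producing the desired shiftable $SMR(6,18;6,2)$. To build the shiftable $SMR(3,9;6,2)$, partition the nine columns into three triples $G_1,G_2,G_3$ so that the columns in $G_i$ are empty in row~$i$, and fill each column $j\in G_i$ with $+c_j$ and $-c_j$ in its two nonempty rows, where $c_1,\ldots,c_9$ is a permutation of $\{1,\ldots,9\}$. The three row-sum conditions collapse to $A=C=-B$, where $A,B,C$ are the signed sums over $G_1,G_2,G_3$; since $|A|=|B|=|C|$ must hold and $1+\cdots+9=45$ is odd, parity forces each subset sum to be odd. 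The partition $\{1,2,4\},\{3,5,7\},\{6,8,9\}$ satisfies this, and one checks that appropriate signs yield $A=C=5$ and $B=-5$; the resulting $3\times 9$ array then automatically has three positive and three negative entries in every row, so it is shiftable.

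For $b\geq 2$, apply Theorem~\ref{TH:km+m',kn+n'} to the shiftable $SMR(4,12;6,2)$ (as $(m,n)=(4,12)$) and the shiftable $SMR(6,18;6,2)$ just built (as $(m',n')=(6,18)$); the hypothesis $m'r=36$ even is satisfied, so with $k=b-1\geq 1$ one obtains a shiftable $SMR(4(b-1)+6,\,12(b-1)+18;\,6,2)=SMR(4b+2,\,3(4b+2);\,6,2)$, and the case $b=1$ is the base construction itself. The main obstacle is precisely this base case: no shiftable $SMR(m,n;6,2)$ with $m\equiv 2\pmod 4$ and $m<6$ is available to combine with, because the natural candidate $SMR(2,6;6,2)$ does not exist---indeed $SMR(2,6)$ does not exist by Theorem~\ref{TH:KSW1}---so a direct combinatorial construction at $m=6$ is unavoidable, and the only subtlety there is the parity argument that rules out naive partitions of $\{1,\ldots,9\}$.
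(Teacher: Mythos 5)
Your recursion for $b\geq 2$ is exactly the paper's: combine $k=b-1$ copies of the shiftable $SMR(4,12;6,2)$ of Figure~\ref{4,12;6,2} with a shiftable $SMR(6,18;6,2)$ via Theorem~\ref{TH:km+m',kn+n'}. The only difference is the base: the paper simply exhibits a shiftable $SMR(6,18;6,2)$ explicitly (Figure~\ref{6,18;6,2}), whereas you build it by doubling a shiftable $SMR(3,9;6,2)$ with Part~2 of Theorem~\ref{TH:kn;kr-km,kn}; that is a legitimate alternative (the paper itself displays such a $3\times 9$ array in Figure~\ref{3,9;6,2} for a later lemma), and your reduction of its construction to the system $A=C=-B$ with the parity observation is a nice explanation of why each part of the partition must have odd sum. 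One caution: your claim that the resulting $3\times 9$ array ``automatically'' has three positive and three negative entries in each row is not justified --- a zero-sum of six entries with distinct absolute values need not be sign-balanced (e.g.\ $1+2+3+6-4-8=0$), so shiftability must be checked for the particular sign choices. For your partition $\{1,2,4\},\{3,5,7\},\{6,8,9\}$ with target sums $5,-5,5$ the sign patterns are essentially forced ($-1+2+4$, $-3+5-7$, $6+8-9$) and one verifies directly that every row then has three entries of each sign, so the construction does go through; just replace ``automatically'' with that explicit check.
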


\begin{proof}
Apply Part 2 of Theorem \ref{TH:kn;kr-km,kn} with the shiftable $SMR(4, 12;$ $ 6,2)$ displayed in
Figure \ref{4,12;6,2} to obtain a shiftable $SMR(4(b-1), 12(b-1); 6, 2)$, say $A$.
Then apply Theorem \ref{TH:km+m',kn+n'} with $A$ and the shiftable $SMR(6,18;6,2)$ displayed in
Figure \ref{6,18;6,2} to obtain a shiftable $SMR(4b+2,3(4b+2);6,2)$.
\end{proof}

\begin{figure}[ht]
{\tiny
$$\begin{array}{|c@{\hspace{0.2mm}}|c@{\hspace{0.2mm}}|c@{\hspace{0.2mm}}
|c@{\hspace{0.2mm}}|c@{\hspace{0.2mm}}|c@{\hspace{0.2mm}}
|c@{\hspace{0.2mm}}|c@{\hspace{0.2mm}}|c@{\hspace{0.2mm}}
|c@{\hspace{0.2mm}}|c@{\hspace{0.2mm}}|c@{\hspace{0.2mm}}
|c@{\hspace{0.2mm}}|c@{\hspace{0.2mm}}|c@{\hspace{0.2mm}}
|c@{\hspace{0.2mm}}|c@{\hspace{0.2mm}}|c@{\hspace{0.2mm}}|} \hline
-1&&3&&&&7&-8&&&&&13&-14&&&& \\ \hline
&-2&&4&&&&8&-9&&&&&14&-15&&&  \\ \hline
&&-3&&5&&&&9&-10&&&&&15&-16&& \\ \hline
&&&-4&&6&&&&10&-11&&&&&16&-17& \\ \hline
1&&&&-5&&&&&&11&-12&-13&&&&&18 \\ \hline
&2&&&&-6&-7&&&&&12&&&&&17&-18 \\ \hline
\end{array}$$
}
\caption{A $SMR(6,18;6,2)$}
		\label{6,18;6,2}
\end{figure}

\begin{lemma}\label{4b+2,(2a+1)(4b+2);4a+2, 2}
There exists a shiftable $SMR(4b+2,(2a+1)(4b+2);4a+2, 2)$ for $a,b\geq 1$.
\end{lemma}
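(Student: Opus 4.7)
The plan is to build the required rectangle by combining one of the $r=6$ rectangles from Lemma~\ref{(4b+2,3(4b+2);6,2)} with an appropriate number of copies of the basic $r=4$ building block provided by Lemma~\ref{4b+2,8ab+4a;4a,2}, using the additive gluing construction of Theorem~\ref{TH:kn+n';kr+r'}. The decomposition I have in mind is
\[
(2a+1)(4b+2) \;=\; (a-1)\cdot 2(4b+2) \;+\; 3(4b+2),
\qquad
4a+2 \;=\; (a-1)\cdot 4 \;+\; 6,
\]
which matches exactly the $(kn+n', kr+r')$ pattern produced by Theorem~\ref{TH:kn+n';kr+r'} with $k=a-1$.

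First I would dispose of the base case $a=1$: here the target rectangle is precisely $SMR(4b+2,3(4b+2);6,2)$, which is supplied as shiftable by Lemma~\ref{(4b+2,3(4b+2);6,2)}. For $a\geq 2$, I would take $A$ to be the shiftable $SMR(4b+2,2(4b+2);4,2)$ obtained from Lemma~\ref{4b+2,8ab+4a;4a,2} with the parameter $a$ there set equal to $1$, and take $B$ to be the shiftable $SMR(4b+2,3(4b+2);6,2)$ from Lemma~\ref{(4b+2,3(4b+2);6,2)}.

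Now I would apply Theorem~\ref{TH:kn+n';kr+r'} to $A$ and $B$ with $m=4b+2$, $n=2(4b+2)$, $r=4$, $n'=3(4b+2)$, $r'=6$, $s=2$ and $k=a-1\geq 1$. The hypothesis $mr'$ even is automatic, since $r'=6$. The theorem then yields a shiftable
\[
SMR\!\bigl(4b+2,\,(a-1)\cdot 2(4b+2)+3(4b+2);\,(a-1)\cdot 4 + 6,\,2\bigr)
= SMR\!\bigl(4b+2,(2a+1)(4b+2);4a+2,2\bigr),
\]
and shiftability is preserved because $B$ is shiftable, which is exactly what the parenthetical ``(shiftable)'' conclusion in Theorem~\ref{TH:kn+n';kr+r'} guarantees.

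There is essentially no obstacle here beyond choosing the right decomposition; the only tiny subtlety is that Theorem~\ref{TH:kn+n';kr+r'} requires $k\geq 1$, so the case $a=1$ has to be peeled off and handled by quoting Lemma~\ref{(4b+2,3(4b+2);6,2)} directly. Everything else is a mechanical parameter check.
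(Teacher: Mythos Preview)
Your proof is correct and follows essentially the same approach as the paper: both handle $a=1$ by quoting Lemma~\ref{(4b+2,3(4b+2);6,2)} directly, and for $a\geq 2$ both glue the shiftable $r=6$ rectangle to a shiftable $r=4(a-1)$ rectangle on $4b+2$ rows via Theorem~\ref{TH:kn+n';kr+r'}. The only cosmetic difference is that the paper first invokes Lemma~\ref{L2,4;4,2} with $p=a-1$, $q=2b+1$ to build the full $SMR(4b+2,2(a-1)(4b+2);4(a-1),2)$ and then applies Theorem~\ref{TH:kn+n';kr+r'} with $k=1$, whereas you take the basic $r=4$ block and let Theorem~\ref{TH:kn+n';kr+r'} do the replication with $k=a-1$; these amount to the same construction.
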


\begin{proof}
By Lemma \ref{(4b+2,3(4b+2);6,2)}, there is a shiftable $SMR(4b+2,3(4b+2);6,2)$ for $b\geq 1$, say $A$.
Apply Lemma \ref{L2,4;4,2} with $p=a-1$ and $q=2b+1$
to obtain a shiftable $SMR(2(2b+1),4(a-1)(2b+1);4(a-1),2)$, say $B$, for $a\geq 2$ and $b\geq 1$.
Finally, apply
 Theorem \ref{TH:kn+n';kr+r'} with arrays $A$ and $B$ to obtain a shiftable $SMR(4b+2,(2a+1)(4b+2);4a+2, 2)$ for
 $a\geq 2$ and $b\geq 1$. When $a=1$ apply Lemma \ref{(4b+2,3(4b+2);6,2)}.
\end{proof}

\begin{lemma}\label{4b+2,(4a+1)(2b+1;4a+1,2}
There exists an $SMR(4b+2,(4a+1)(2b+1);4a+1,2)$ for $a,b\geq 1$.
\end{lemma}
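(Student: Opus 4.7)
The plan is to mirror the strategy of Lemma \ref{4b,8ab+2b;4a+1,2} exactly, substituting the $4b+2$ row-count for $4b$ everywhere. The key observation is the decomposition $(4a+1)(2b+1) = 4(a-1)(2b+1) + 5(2b+1)$, which suggests gluing together a ``big shiftable block'' of width $4(a-1)(2b+1)$ and row-fill $4(a-1)$, with a ``small residual block'' of width $5(2b+1)$ and row-fill $5$. This is precisely the format handled by Theorem \ref{TH:kn+n';kr+r'} with $k=1$.

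Concretely, I would first invoke Proposition \ref{m,5m/2;5,2} with $m = 4b+2$ (valid since $m$ is even and $m\geq 6$) to obtain an $SMR(4b+2,\, 5(2b+1);\, 5,\, 2)$; call this $A$. Then, for $a\geq 2$, I would invoke Lemma \ref{4b+2,8ab+4a;4a,2} with parameter $a-1$ in place of $a$ to produce a shiftable $SMR(4b+2,\, 2(a-1)(4b+2);\, 4(a-1),\, 2)$; call this $B$. Since $B$ is shiftable and $(4b+2)\cdot 5$ is even, the hypotheses of Theorem \ref{TH:kn+n';kr+r'} are met, and applying it with $k=1$ (with $B$ in the ``shiftable'' slot and $A$ appended on the side) yields an $SMR(4b+2,\, 4(a-1)(2b+1) + 5(2b+1);\, 4(a-1)+5,\, 2)$, which simplifies to $SMR(4b+2,\, (4a+1)(2b+1);\, 4a+1,\, 2)$, as desired.

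The base case $a=1$ is not reachable from the recipe above because Lemma \ref{4b+2,8ab+4a;4a,2} requires $a\geq 1$ (so we would need an ``$a=0$'' block, which is empty). Fortunately, $a=1$ is handled directly: the array $A$ from Proposition \ref{m,5m/2;5,2} is itself an $SMR(4b+2,\, 5(2b+1);\, 5,\, 2)$, which matches the target $SMR(4b+2,\, (4\cdot 1 + 1)(2b+1);\, 4\cdot 1 + 1,\, 2)$ exactly. So the proof splits into the direct case $a=1$ and the Theorem \ref{TH:kn+n';kr+r'}-based case $a\geq 2$.

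There is no real obstacle here; the only thing to check is the arithmetic bookkeeping for the column count and fill count after concatenation, and the mild hypothesis $(4b+2)\cdot 5$ even needed by Theorem \ref{TH:kn+n';kr+r'}. Both are routine. I expect the write-up to be only a few lines, essentially verbatim parallel to Lemma \ref{4b,8ab+2b;4a+1,2}.
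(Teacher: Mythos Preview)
Your proposal is correct and follows essentially the same approach as the paper: both combine the $SMR(4b+2,5(2b+1);5,2)$ from Proposition~\ref{m,5m/2;5,2} with a shiftable $SMR(4b+2,4(a-1)(2b+1);4(a-1),2)$ via Theorem~\ref{TH:kn+n';kr+r'}. The only cosmetic differences are that the paper obtains the shiftable block directly from Lemma~\ref{L2,4;4,2} (with $p=a-1$, $q=2b+1$) rather than through the packaged Lemma~\ref{4b+2,8ab+4a;4a,2}, and that you handle the base case $a=1$ more explicitly.
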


\begin{proof}
Apply Lemma \ref{L2,4;4,2} with $p=a-1$ and $q=2b+1$ to obtain
a shiftable $SMR(2(2b+1), 4(a-1)(2b+1); 4(a-1), 2)$, say $A$, for $a\geq 2$.
By Proposition \ref{m,5m/2;5,2} there is an $SMR(4b+2,5(2b+1);5,2)$, say $B$, for $b\geq 1$.
Finally, apply
 Theorem \ref{TH:kn+n';kr+r'} with arrays $A$ and $B$ to obtain an $SMR(4b+2,(4a+1)(2b+1);4a+1,2)$
 for $a,b\geq 1$.
\end{proof}

\begin{lemma}\label{4b+2,(4a+3)(2b+1)(4a+3);4a+3,2}
There exists an $SMR(4b+2,(4a+3)(2b+1);4a+3,2)$ for $a, b\geq 0$.
\end{lemma}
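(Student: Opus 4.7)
The plan is to mirror the strategy used in Lemma \ref{4b,8ab+2b;4a+3,2}: decompose the row-density $4a+3$ as $4a+3$ and the column count $(4a+3)(2b+1)$ as $a\cdot 4(2b+1) + 3(2b+1)$, then glue together a shiftable ``$4a$-block'' with a single ``$3$-block'' using Theorem \ref{TH:kn+n';kr+r'}. The two building blocks needed are a shiftable $SMR(4b+2,4(2b+1);4,2)$ (to be inflated $a$ times) and an $SMR(4b+2,3(2b+1);3,2)$, both of which are already in hand.

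First I would dispose of the degenerate parameter values. When $b=0$ the target is $SMR(2,4a+3;4a+3,2)$; since $4a+3\equiv 3\pmod 4$, this is delivered directly by Lemma \ref{2,n,n,2)}. When $a=0$ and $b\geq 1$ the target collapses to $SMR(4b+2,3(2b+1);3,2)$, which has the shape $SMR(m,3m/2;3,2)$ with $m=4b+2$ even and is therefore produced by Proposition \ref{m,3m/2;3,2}.

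For the main case $a\geq 1$ and $b\geq 1$, I would let $A$ be a shiftable $SMR(4b+2,4(2b+1);4,2)$, obtained from Lemma \ref{L2,4;4,2} with $p=1$ and $q=2b+1$, and let $B$ be an $SMR(4b+2,3(2b+1);3,2)$ from Proposition \ref{m,3m/2;3,2}. Setting $m=4b+2$, $r=4$, $r'=3$, $s=2$, $n=4(2b+1)$, $n'=3(2b+1)$, and $k=a$, the hypothesis $mr'=3(4b+2)$ is even is satisfied, so Theorem \ref{TH:kn+n';kr+r'} produces an
\[
SMR\bigl(4b+2,\; a\cdot 4(2b+1)+3(2b+1);\; 4a+3,\; 2\bigr)=SMR\bigl(4b+2,(4a+3)(2b+1);4a+3,2\bigr),
\]
exactly as required.

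I do not anticipate a real obstacle: all three ingredients (the shiftable $SMR(2q,4pq;4p,2)$ from Lemma \ref{L2,4;4,2}, the $SMR(m,3m/2;3,2)$ from Proposition \ref{m,3m/2;3,2}, and the gluing Theorem \ref{TH:kn+n';kr+r'}) are already established, and the parity condition $mr'$ even is automatic because $m=4b+2$ is even. The only thing worth double-checking is the parameter arithmetic $kn+n'=(4a+3)(2b+1)$ and $kr+r'=4a+3$, which is immediate. The rectangle produced in the main case is not required to be shiftable (and indeed it will not be, since $B$ is not), which is consistent with the statement of the lemma.
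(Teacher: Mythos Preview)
Your proposal is correct and follows essentially the same route as the paper: both arguments combine a shiftable $SMR(4b+2,4a(2b+1);4a,2)$ coming from Lemma~\ref{L2,4;4,2} with the $SMR(4b+2,3(2b+1);3,2)$ of Proposition~\ref{m,3m/2;3,2} via Theorem~\ref{TH:kn+n';kr+r'} (the paper takes $p=a$ in Lemma~\ref{L2,4;4,2} and $k=1$ in Theorem~\ref{TH:kn+n';kr+r'}, whereas you take $p=1$ and $k=a$, which is equivalent). In fact your write-up is more careful than the paper's, since you explicitly treat the boundary cases $b=0$ (via Lemma~\ref{2,n,n,2)}) and $a=0$ (via Proposition~\ref{m,3m/2;3,2}), while the paper's proof as written only concludes the result for $a,b\geq 1$.
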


\begin{proof}
Apply Lemma \ref{L2,4;4,2} with $p=a$ and $q=2b+1$ to obtain
a shiftable $SMR(2(2b+1), 4a(2b+1); 4a, 2)$, say $A$.
By Proposition \ref{m,3m/2;3,2} there is an $SMR(4b+2,3(2b+1);3,2)$, say $B$, for $b\geq 1$.
Finally, apply
 Theorem \ref{TH:kn+n';kr+r'} with arrays $A$ and $B$ to obtain an $SMR(4b+2,(4a+3)(2b+1);4a+3,2)$
 for $a,b\geq 1$.
\end{proof}

We conclude this section with the following theorem.
\begin{theorem}\label{mainTHSEC4}
Let $m$ be even. There exists an $SMR(m,n;r,2)$ if and only if either $m=2$ and $n=r\equiv 0,3 \pmod 4$ or
$m\geq 4$, $r\geq 3$ and $mr=2n$.
\end{theorem}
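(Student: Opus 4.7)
The plan is to separate the two directions. Necessity is short; sufficiency is a bookkeeping exercise assembling the constructions of Sections~\ref{SEC2}, \ref{SEC3}, and the present one.

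For necessity, I would first observe that counting filled cells two ways gives $mr = 2n$, so in particular $mr$ is even and the entry set $X$ contains no $0$. If $m = 2$, then $s = 2 = m$ forces every column (hence every cell) to be filled, so $r = n$ and the array is a fully-filled $SMR(2, n)$; by Theorem~\ref{TH:KSW1} this exists precisely when $n \equiv 0, 3 \pmod 4$. If $m \geq 4$, I must rule out $r \in \{1, 2\}$: the value $r = 1$ forces the unique entry of each row to equal $0 \notin X$, while $r = 2$ together with $mr = 2n$ gives $n = m$, hence an $SMS(m, 2)$, which is impossible for $m \geq 2$ by Theorem~\ref{TH:KSW2}.

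For sufficiency, the case $m = 2$ is dispatched by Lemma~\ref{2,n,n,2)} (for $n \equiv 3 \pmod 4$) and by Lemma~\ref{L2,4;4,2} taken with $q = 1$ and $p = n/4$ (for $n \equiv 0 \pmod 4$). For $m \geq 4$ and $r \geq 3$, I would split on the residues of $m$ and $r$ modulo $4$. When $m = 4b$: the value $r = 3$ is handled by Proposition~\ref{m,3m/2;3,2}, and for $r \geq 4$ the four residues of $r$ modulo $4$ are covered by Lemmas~\ref{4b,8ab;4a,2}, \ref{4b,8ab+2b;4a+1,2}, \ref{4b,8ab+4b;4a+2,2}, and \ref{4b,8ab+2b;4a+3,2} respectively. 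When $m = 4b + 2$: the value $r = 3$ is again Proposition~\ref{m,3m/2;3,2}, and for $r \geq 4$ the four residues are covered by Lemmas~\ref{4b+2,8ab+4a;4a,2}, \ref{4b+2,(4a+1)(2b+1;4a+1,2}, \ref{4b+2,(2a+1)(4b+2);4a+2, 2}, and \ref{4b+2,(4a+3)(2b+1)(4a+3);4a+3,2} respectively.

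The main obstacle is organizational rather than conceptual: one must check that every admissible pair $(m, r)$ lands in exactly one of the eight residue boxes above, and must be careful at the smallest value $r = 3$, which is \emph{not} reached by the ``$r \equiv 3 \pmod 4$'' lemmas (whose proofs require $a \geq 1$) and so must be supplied by Proposition~\ref{m,3m/2;3,2} in both subcases of $m \pmod 4$. Once that verification is completed, the theorem follows by quoting the appropriate lemma or proposition in each of the nine resulting cases.
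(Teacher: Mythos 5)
Your proposal is correct and follows exactly the route the paper intends: the theorem is stated as a summary of Section 4 (the paper in fact gives no explicit proof), and your case analysis by $m \bmod 4$ and $r \bmod 4$, with $r=3$ supplied separately by Proposition \ref{m,3m/2;3,2} and necessity via $mr=ns$, Theorem \ref{TH:KSW1}, and Theorem \ref{TH:KSW2}, is the intended assembly. No gaps.
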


\section {The existence of an $SMR(m,n;r,2)$ with $m$ odd and $r$ even}\label{SEC5}
In this section we investigate the existence of a signed magic rectangle $(m,n;r,2)$ with $m$ odd and $r$ even. Note that if $m$ and $r$ are both odd, then there is no $SMR(m,n;r,2)$.

\subsection {The existence of a shiftable $SMR(m,n;4a,2)$ with $m$ odd}
We consider two cases: $m=4b+1$ and $m=4b+3$.

\begin{lemma}\label{4b+1,2a(4b+1);4a,2}
There exists a shiftable $SMR(4b+1,2a(4b+1);4a,2)$ for all $a,b\geq 1$.
\end{lemma}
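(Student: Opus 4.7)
The plan is to assemble the desired shiftable rectangle from two smaller shiftable building blocks via Theorem \ref{TH:km+m',kn+n'}. Since $m = 4b+1$ is odd, it cannot be written as $2q$, so we cannot just rescale a shiftable $SMR(2,4;4,2)$ as in Lemmas \ref{4b,8ab;4a,2} and \ref{4b+2,8ab+4a;4a,2}. Instead, I would decompose $4b+1 = 2(2b-1) + 3$, using a width-$2$ strip repeated $2b-1$ times together with a single width-$3$ strip.

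First, apply Lemma \ref{L2,4;4,2} with $p = a$ and $q = 1$ to obtain a shiftable $SMR(2, 4a; 4a, 2)$, which I will call $A$. Second, apply Part 1 of Theorem \ref{TH:kn;kr-km,kn} with $k = a$ to the shiftable $SMR(3, 6; 4, 2)$ of Figure \ref{3,6;4,2} to obtain a shiftable $SMR(3, 6a; 4a, 2)$, which I will call $B$.

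Now invoke Theorem \ref{TH:km+m',kn+n'} with $A$ as the first (shiftable) input, $B$ as the second (shiftable) input, and $k = 2b-1 \geq 1$. The hypothesis that $m'r$ is even holds because $m'r = 3 \cdot 4a = 12a$. The theorem then produces a shiftable rectangle of dimensions
$$\bigl(2(2b-1) + 3\bigr) \times \bigl(4a(2b-1) + 6a\bigr) = (4b+1) \times 2a(4b+1),$$
with $r = 4a$ filled cells per row and $s = 2$ filled cells per column, which is exactly the required shiftable $SMR(4b+1, 2a(4b+1); 4a, 2)$.

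No step presents a real obstacle: the two building blocks $A$ and $B$ are produced by results already established in the excerpt, and the only verification needed is the routine arithmetic $2(2b-1)+3 = 4b+1$ and $4a(2b-1)+6a = 2a(4b+1)$. The one substantive observation is the choice of decomposition $4b+1 = 2(2b-1) + 3$ rather than, say, $4b+1 = 2(2b-2) + 5$, which would have forced the construction of a shiftable $SMR(5,10a;4a,2)$ from scratch.
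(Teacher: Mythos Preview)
Your proof is correct, but it differs from the paper's in two ways. First, the paper uses the decomposition $4b+1 = 4(b-1) + 5$, taking as its odd building block the shiftable $SMR(5,10;4,2)$ displayed in Figure~\ref{5,10;4,2}; you instead use $4b+1 = 2(2b-1) + 3$ with the shiftable $SMR(3,6;4,2)$ of Figure~\ref{3,6;4,2}. Second, the paper first constructs the $r=4$ case $SMR(4b+1,2(4b+1);4,2)$ via Theorem~\ref{TH:km+m',kn+n'} and only then stretches horizontally to $r=4a$ via Part~1 of Theorem~\ref{TH:kn;kr-km,kn}, whereas you stretch each building block to $r=4a$ first and then glue. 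Your route is slightly more economical in that it needs no new base example beyond those already used elsewhere in the paper (Figures~\ref{2,4;4,2} and~\ref{3,6;4,2}); ironically, it is exactly the $2(2b-2)+5$ alternative you dismissed that the paper effectively pursues, supplying the $5\times 10$ array explicitly rather than avoiding it.
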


\begin{proof}
Apply Lemma \ref{L2,4;4,2} with $p=a=1$ and $q=2(b-1)$ to obtain
a shiftable $SMR(4(b-1),8(b-1);4,2)$ for $b\geq 2$.

Figure \ref{5,10;4,2} displays a shiftable $SMR(5,10; 4,2)$. Therefore there is a shiftable $SMR(4b+1,2(4b+1);4,2)$ by
Theorem \ref {TH:km+m',kn+n'}. Now apply Part 1 of Theorem \ref{TH:kn;kr-km,kn} to obtain a
shiftable $SMR(4b+1,2a(4b+1);4a,2)$ for all $a,b\geq 1$.
\end{proof}

\begin{figure}[ht]
$$\begin{array}{|c|c|c|c|c|c|c|c|c|c|}\hline
1&&&&-5&-6&&&&10 \\ \hline
-1&2&&&&6&-7&&& \\ \hline
&-2&3&&&&7&-8&& \\ \hline
&&-3&4&&&&8&-9& \\ \hline
&&&-4&5&&&&9&-10 \\ \hline
\end{array}$$
\caption{A shiftable $SMR(5,10;4,2)$}
		\label{5,10;4,2}
\end{figure}

\begin{lemma}\label{4b+3,2a(4b+3);4a,2}
There exists a shiftable $SMR(4b+3,2a(4b+3);4a,2)$ for all $a,b\geq 1$.
\end{lemma}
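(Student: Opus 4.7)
The plan is to mirror the proof of Lemma \ref{4b+1,2a(4b+1);4a,2} step-for-step, substituting the shiftable $SMR(3,6;4,2)$ of Figure \ref{3,6;4,2} for the shiftable $SMR(5,10;4,2)$ that played the role of the ``odd leftover'' block there. Since $4b+3 = 4b + 3$, the decomposition is even cleaner than in the $4b+1$ case, and the construction handles $b\geq 1$ uniformly with no exceptional small value.

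First I would apply Lemma \ref{L2,4;4,2} with $p=1$ and $q=2b$ to obtain a shiftable $SMR(4b,8b;4,2)$ for every $b\geq 1$. Next, since the parity hypothesis $m'r$ even of Theorem \ref{TH:km+m',kn+n'} is satisfied by $m'r = 3\cdot 4 = 12$, I would combine this $SMR(4b,8b;4,2)$ with the shiftable $SMR(3,6;4,2)$ of Figure \ref{3,6;4,2} via Theorem \ref{TH:km+m',kn+n'} with $k=1$, obtaining a shiftable $SMR(4b+3,\,8b+6;\,4,2)$, i.e.\ a shiftable $SMR(4b+3,\,2(4b+3);\,4,2)$. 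Finally, Part 1 of Theorem \ref{TH:kn;kr-km,kn} applied with $k=a$ scales the number of columns and the row-fill parameter $r$ by $a$, producing the desired shiftable $SMR(4b+3,\,2a(4b+3);\,4a,2)$ for all $a,b\geq 1$.

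I do not expect any real obstacle: the two input arrays are guaranteed to be shiftable (one by Lemma \ref{L2,4;4,2}, which produces only shiftable arrays, and the other by the explicit Figure \ref{3,6;4,2}), and the parity requirement of Theorem \ref{TH:km+m',kn+n'} reduces to the trivial observation that $12$ is even. As a sanity check, the specialization $a=b=1$ should recover exactly the shiftable $SMR(7,14;4,2)$ already displayed in Figure \ref{7,14;4,2}, which is precisely the array built from the shiftable $SMR(4,8;4,2)$ (the $q=2$ output of Lemma \ref{L2,4;4,2}) together with the shiftable $SMR(3,6;4,2)$.
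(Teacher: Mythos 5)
Your proposal is correct and follows essentially the same route as the paper's own proof: Lemma \ref{L2,4;4,2} with $p=1$, $q=2b$ to get a shiftable $SMR(4b,8b;4,2)$, then Theorem \ref{TH:km+m',kn+n'} with the shiftable $SMR(3,6;4,2)$ of Figure \ref{3,6;4,2} to get a shiftable $SMR(4b+3,2(4b+3);4,2)$, and finally Part 1 of Theorem \ref{TH:kn;kr-km,kn} with $k=a$. The only (immaterial) difference is that you spell out the $k=1$ application and the parity check $m'r=12$ explicitly, which the paper leaves implicit.
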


\begin{proof}
Apply Lemma \ref{L2,4;4,2} with $p=1$ and $q=2b$ to obtain
a shiftable $SMR(4b,8b;4,2)$ for $b\geq 1$.
Figure \ref{3,6;4,2} displays a shiftable $SMR(3,6; 4,2)$. Therefore,  by Theorem
\ref{TH:km+m',kn+n'}, there is a shiftable $SMR(4b+3,2(4b+3);4,2)$. We now apply Part 1 of Theorem \ref{TH:kn;kr-km,kn} to obtain a
shiftable $SMR(4b+3,2a(4b+3);4a,2)$ for all $a,b\geq 1$.
\end{proof}

\begin{figure}[ht]
$$\begin{array}{|c|c|c|c|c|c|}\hline
1&&-3&-4&&6 \\ \hline
-1&2&&4&-5& \\ \hline
&-2&3&&5&-6 \\ \hline
\end{array}$$
\caption{A shiftable $SMR(3,6;4,2)$}
		\label{3,6;4,2}
\end{figure}

\subsection {The existence of a shiftable $SMR(m,n;4a+2,2)$ with $m$ odd}

We consider two cases: $m=4b+1$ and $m=4b+3$.

\begin{lemma}\label{4b+1,3(4b+1);6,2}
There exists a shiftable $SMR(4b+1,3(4b+1);6,2)$ for all $b\geq 1$.
\end{lemma}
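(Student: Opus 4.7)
The plan is to mimic the structure of Lemma \ref{(4b+2,3(4b+2);6,2)}, replacing the base shiftable $SMR(6,18;6,2)$ of Figure \ref{6,18;6,2} with a suitably constructed shiftable $SMR(5,15;6,2)$. Note that the target parameters match: $4(b-1)+5=4b+1$ and $12(b-1)+15=12b+3=3(4b+1)$, and $m'r=5\cdot 6=30$ is even, so Theorem \ref{TH:km+m',kn+n'} can be applied.

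More precisely, the first step is to exhibit, by an explicit construction displayed in a figure, a shiftable $SMR(5,15;6,2)$, call it $B$. Each row of $B$ must have exactly three positive and three negative entries summing to zero, and each column must contain exactly one pair $\{+k,-k\}$, with the $30$ labels $\pm 1,\ldots,\pm 15$ each used once. The second step, for $b\geq 2$, is to apply Theorem \ref{TH:km+m',kn+n'} with $k=b-1$ to the shiftable $SMR(4,12;6,2)$ of Figure \ref{4,12;6,2} (as the shiftable ``first'' input) and $B$ (as the second input), producing a shiftable $SMR(4(b-1)+5,\,12(b-1)+15;\,6,2)=SMR(4b+1,\,3(4b+1);\,6,2)$. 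The residual case $b=1$ is handled by $B$ itself, which already has the required parameters.

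The main obstacle is producing the base array $B$. The constraints are tight: $30$ of the $75$ cells are filled, each row must realize a zero-sum partition of six labels into three positive and three negative entries, and the column placements must simultaneously pair $+k$ with $-k$ for every $k\in\{1,\ldots,15\}$. A natural route is to first fix a $2$-regular incidence pattern that assigns each of the $15$ columns to a pair of rows, then orient each pair by choosing signs so that the row sums vanish; this reduces the problem to splitting $\{1,\ldots,15\}$ into five triples, one per row, whose elements can be signed to sum to zero. Once $B$ is written down, verifying shiftability, the row and column sum conditions, and the correct use of $\pm 1,\ldots,\pm 15$ is a direct inspection, after which Theorem \ref{TH:km+m',kn+n'} does the rest of the work uniformly in $b$.
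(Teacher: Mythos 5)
Your approach is exactly the paper's: combine the shiftable $SMR(4,12;6,2)$ of Figure \ref{4,12;6,2} with an explicit shiftable $SMR(5,15;6,2)$ via Theorem \ref{TH:km+m',kn+n'} with $k=b-1$, and your parameter check ($4(b-1)+5=4b+1$, $12(b-1)+15=3(4b+1)$, $m'r=30$ even) matches the paper's. The one piece you have not actually supplied is the base array itself --- you only sketch how one might be found --- whereas its existence is the entire nontrivial content here; the paper exhibits one in Figure \ref{5,15;6,2}, and you would need to write such an array down explicitly for the argument to be complete.
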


\begin{proof}
Apply Part 2 of Theorem \ref{TH:kn;kr-km,kn} with the shiftable $SMR(4,12;$ $6,2)$ given in Figure
\ref{4,12;6,2} to obtain a shiftable $SMR(4(b-1),12(b-1);6,2)$ for $b\geq 1$. Figure \ref{5,15;6,2} displays a shiftable $SMR(5,15; 6,2)$. Therefore there is a shiftable $SMR(4b+1,3(4b+1);6,2)$ for $b\geq 1$ by
Theorem \ref{TH:km+m',kn+n'}.
\end{proof}

\begin{figure}[ht]
{\small
$$\begin{array}{|c@{\hspace{0.1mm}}|c@{\hspace{0.1mm}}|c@{\hspace{0.1mm}}
|c@{\hspace{0.1mm}}|c@{\hspace{0.1mm}}|c@{\hspace{0.1mm}}
|c@{\hspace{0.1mm}}|c@{\hspace{0.1mm}}|c@{\hspace{0.1mm}}
|c@{\hspace{0.1mm}}|c@{\hspace{0.1mm}}|c@{\hspace{0.1mm}}
|c@{\hspace{0.1mm}}|c@{\hspace{0.1mm}}|c@{\hspace{0.1mm}}|} \hline
 1&-2&&&&-6&&&&10&&12&&&-15 \\ \hline
 &2&-3&&&6&-7&&&&&&-13&&15 \\ \hline
 &&3&-4&&&7&-8&&&-11&&13&& \\ \hline
 &&&4&-5&&&8&-9&&&-12&&14& \\ \hline
 -1&&&&5&&&&9&-10&11&&&-14& \\ \hline
\end{array}$$
}
\caption{A shiftable $SMR(5,15;6,2)$}
		\label{5,15;6,2}
\end{figure}

\begin{lemma}\label{4b+1,(2a+1)(4b+1);4a+2,2}
There exists a shiftable $SMR(4b+1,(2a+1)(4b+1);4a+2,2)$ for all $a,b\geq 1$.
\end{lemma}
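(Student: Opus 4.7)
The target parameters decompose naturally as $(2a+1)(4b+1) = 3(4b+1) + 2(a-1)(4b+1)$ and $4a+2 = 6 + 4(a-1)$, which exactly matches the shape of the output of Theorem \ref{TH:kn+n';kr+r'} applied with $k=1$. So the plan is to build the array by gluing two previously constructed shiftable rectangles on $4b+1$ rows: one contributing the ``$3(4b+1)$ columns / $6$ per row'' part, the other contributing the ``$2(a-1)(4b+1)$ columns / $4(a-1)$ per row'' part.

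First I would invoke Lemma \ref{4b+1,3(4b+1);6,2} to produce a shiftable $SMR(4b+1,3(4b+1);6,2)$, call it $B$. Next, for $a\geq 2$, I would apply Lemma \ref{4b+1,2a(4b+1);4a,2} with $a$ replaced by $a-1$ to produce a shiftable $SMR(4b+1,2(a-1)(4b+1);4(a-1),2)$, call it $A$. Both arrays have $s=2$, and the value $mr' = (4b+1)\cdot 6$ for the second-slot array is even, so the hypothesis of Theorem \ref{TH:kn+n';kr+r'} is satisfied; in fact since both arrays are shiftable the conclusion is also shiftable.

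Applying Theorem \ref{TH:kn+n';kr+r'} with $A$ as the shiftable ``multiplied'' array, $B$ as the second array, and $k=1$ yields a shiftable
\[
SMR\bigl(4b+1,\; 1\cdot 2(a-1)(4b+1) + 3(4b+1),\; 1\cdot 4(a-1) + 6,\; 2\bigr) = SMR(4b+1,(2a+1)(4b+1),4a+2,2),
\]
as desired. The case $a=1$ is not covered by this argument (since $A$ would have $0$ columns), but for $a=1$ the required rectangle is precisely $SMR(4b+1,3(4b+1);6,2)$, which is handled directly by Lemma \ref{4b+1,3(4b+1);6,2}.

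The main obstacle here is essentially bookkeeping rather than construction: one must verify that every parity/evenness hypothesis in Theorem \ref{TH:kn+n';kr+r'} holds (to guarantee that the glued array is shiftable and that the shift by $mr'/2$ makes sense as an integer), and that the base inputs of both lemmas are available for all $b\geq 1$. Since $mr'$ is even whenever $r'$ itself is even, and both building blocks have even $r$ (namely $6$ and $4(a-1)$), these checks are immediate. No direct calculation or ad hoc small case beyond $a=1$ is required.
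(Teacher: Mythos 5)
Your proposal is correct and follows essentially the same route as the paper: decompose $(2a+1)(4b+1)=2(a-1)(4b+1)+3(4b+1)$ and $4a+2=4(a-1)+6$, glue a shiftable $SMR(4b+1,2(a-1)(4b+1);4(a-1),2)$ to the shiftable $SMR(4b+1,3(4b+1);6,2)$ of Lemma \ref{4b+1,3(4b+1);6,2} via Theorem \ref{TH:kn+n';kr+r'}, and handle $a=1$ by Lemma \ref{4b+1,3(4b+1);6,2} alone. The only cosmetic difference is that you cite Lemma \ref{4b+1,2a(4b+1);4a,2} (with $a-1$ in place of $a$) for the first block, whereas the paper re-derives that block inline by repeating that lemma's construction.
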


\begin{proof}
Apply Lemma \ref{L2,4;4,2} with $p=1$ and $q=2b-2$ to obtain
a shiftable $SMR(2(2b-2),4(2b-2);4,2)$ for $b\geq 2$.
Figure \ref{5,10;4,2} displays a shiftable $SMR(5,10; 4,2)$. Therefore there is a shiftable $SMR(4b+1,2(4b+1);4,2)$ for $b\geq 1$ by
Theorem \ref{TH:km+m',kn+n'}.
Now  apply Part 1 of Theorem \ref{TH:kn;kr-km,kn} to obtain a
shiftable $SMR(4b+1,2(a-1)(4b+1);4(a-1),2)$, say $A_1$, for all $a\geq 2$ and $b\geq 1$.
By Lemma \ref{4b+1,3(4b+1);6,2} there exists a shiftable $SMR(4b+1,3(4b+1);6,2)$ for $b\geq 1$, say $A_2$. Now apply Theorem \ref{TH:kn+n';kr+r'} with $A_1$ and $A_2$ to obtain a shiftable $SMR(4b+1,(2a+1)(4b+1);4a+2,2)$ for $a\geq 2$ and $b\geq 1$. When $a=1$, we apply Lemma \ref{4b+1,3(4b+1);6,2}.
\end{proof}

\begin{lemma}\label{4b+3,3(4b+3);6,2}
There exists a shiftable $SMR(4b+3,3(4b+3);6,2)$ for all $b\geq 1$.
\end{lemma}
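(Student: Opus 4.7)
The plan is to mimic the proof of Lemma~\ref{4b+1,3(4b+1);6,2} almost verbatim, using $m=4b+3$ in place of $m=4b+1$. First I would apply Part~2 of Theorem~\ref{TH:kn;kr-km,kn} to the shiftable $SMR(4,12;6,2)$ displayed in Figure~\ref{4,12;6,2}, producing a shiftable $SMR\!\left(4(b-1),12(b-1);6,2\right)$ for every $b\geq 2$. Call this array $A$.

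Next I would exhibit, via a direct construction displayed in a figure, a shiftable $SMR(7,21;6,2)$; call this array $B$. Dimensionally this is the natural base case: every row has $6$ filled cells, every column has $2$ filled cells, $mr=42=ns$, and the entries run through $\{\pm1,\pm2,\ldots,\pm 21\}$. To build $B$ I would use an ``empty pattern'' in which each of the three groups of seven columns (columns $1$--$7$, $8$--$14$, $15$--$21$) has its empty cells arranged so that each row is filled in exactly $6$ columns; then in each column the two filled cells get opposite signs (forcing shiftability and zero column sums), and one assigns absolute values $\{1,\ldots,21\}$ bijectively so that the row sums also vanish. This is exactly the kind of direct ad~hoc construction that produced Figures~\ref{4,12;6,2}, \ref{6,18;6,2}, and \ref{5,15;6,2}.

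Having $A$ and $B$ in hand, I would apply Theorem~\ref{TH:km+m',kn+n'} with $m=4$, $n=12$, $m'=7$, $n'=21$, $r=6$, $s=2$, and $k=b-1$. The output is a shiftable $SMR\!\left(4(b-1)+7,\,12(b-1)+21;\,6,2\right) = SMR\!\left(4b+3,\,3(4b+3);\,6,2\right)$ for all $b\geq 2$. For the base case $b=1$, no composition is needed: the direct array $B$ is already an $SMR(7,21;6,2)$, which is exactly $SMR(4b+3,3(4b+3);6,2)$ with $b=1$.

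The only real obstacle is producing the direct shiftable $SMR(7,21;6,2)$ in Step~2; the rest is a mechanical application of already-established composition theorems. Once that seed array is written down and verified (row sums zero, $\pm k$ paired in each column so shiftability holds, each of $\pm1,\ldots,\pm 21$ used exactly once, six filled cells per row, two per column), the lemma follows immediately.
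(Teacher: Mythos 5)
Your composition step is sound and is essentially the same mechanism the paper uses (Theorem~\ref{TH:km+m',kn+n'} applied to copies of the shiftable $SMR(4,12;6,2)$ plus a small odd-order seed), but the proof as written has a genuine gap: the seed array, a shiftable $SMR(7,21;6,2)$, is never actually exhibited. You describe how you ``would'' build it --- choose an empty-cell pattern, put opposite signs in each column, assign absolute values so the row sums vanish --- but that description is exactly the nontrivial content of the lemma in a paper of this kind. Everything else is mechanical; the existence claim lives or dies with the explicit seed, and asserting that a suitable assignment of $\{1,\ldots,21\}$ exists without producing or verifying one does not establish it. (Such an array does exist, but that fact is not free.)

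The paper avoids this difficulty by choosing a much smaller seed: the shiftable $SMR(3,9;6,2)$ of Figure~\ref{3,9;6,2}, a $3\times 9$ array that is written down explicitly and checked by inspection. It then applies Theorem~\ref{TH:km+m',kn+n'} with $k=b$ to the shiftable $SMR(4b,12b;6,2)$ and this $3\times 9$ block, giving $4b+3$ rows and $12b+9=3(4b+3)$ columns uniformly for all $b\geq 1$, with no separate base case. If you want to salvage your version, the cleanest fix is to note that your $7\times 21$ seed can itself be obtained by gluing the $SMR(4,12;6,2)$ of Figure~\ref{4,12;6,2} to a $3\times 9$ seed --- at which point you have rediscovered the paper's argument and should simply present the $3\times 9$ array directly.
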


\begin{proof}
Apply Part 2 of Theorem \ref{TH:kn;kr-km,kn} with the shiftable $SMR(4,12;$ $6,2)$ given in Figure
\ref{4,12;6,2} to obtain a shiftable $SMR(4b,12b;6,2)$ for $b\geq 1$. Figure \ref{3,9;6,2} displays a shiftable $SMR(3,9; 6,2)$. Therefore there is a shiftable $SMR(4b+3,3(4b+3);6,2)$ by Theorem \ref{TH:km+m',kn+n'}.
\end{proof}

\begin{figure}[ht]
$$\begin{array}{|c|c|c|c|c|c|c|c|c|}\hline
 1&-2&&-4&&6&7&-8& \\ \hline
&2&-3&4&-5&&-7&&9 \\ \hline
-1&&3&&5&-6&&8&-9 \\ \hline
\end{array}$$
\caption{A shiftable $SMR(3,9;6,2)$}
		\label{3,9;6,2}
\end{figure}

\begin{lemma}\label{4b+3,(2a+1)(4b+3);4a+2,2}
There exists a shiftable $SMR(4b+3,(2a+1)(4b+3);4a+2,2)$ for all $a,b\geq 1$.
\end{lemma}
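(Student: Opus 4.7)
The plan is to mirror exactly the proof of Lemma \ref{4b+1,(2a+1)(4b+1);4a+2,2}, replacing the role of $4b+1$ by $4b+3$ throughout. The target parameters $(4b+3, (2a+1)(4b+3); 4a+2, 2)$ decompose naturally as a sum of a ``bulk'' piece carrying $4(a-1)$ filled cells per row and a ``$6$-piece'' carrying the remaining $6$ filled cells per row, which is precisely what Theorem \ref{TH:kn+n';kr+r'} is designed to glue together along the column direction.

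For $a\geq 2$, first I would invoke Lemma \ref{4b+3,2a(4b+3);4a,2} with $a$ replaced by $a-1$ to obtain a shiftable $SMR(4b+3,\,2(a-1)(4b+3);\,4(a-1),\,2)$, call it $A_1$. Next, by Lemma \ref{4b+3,3(4b+3);6,2}, there is a shiftable $SMR(4b+3,\,3(4b+3);\,6,\,2)$, call it $A_2$. I would then apply Theorem \ref{TH:kn+n';kr+r'} with $k=1$, taking $A_1$ as the input ``$SMR(m,n;r,s)$'' and $A_2$ as the input ``$SMR(m,n';r',s)$''; the required parity condition $mr'=(4b+3)\cdot 6$ even is immediate. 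The resulting array has parameters
\[
\bigl(4b+3,\; 2(a-1)(4b+3) + 3(4b+3);\; 4(a-1)+6,\; 2\bigr) = \bigl(4b+3,\; (2a+1)(4b+3);\; 4a+2,\; 2\bigr),
\]
and it is shiftable because Theorem \ref{TH:kn+n';kr+r'} preserves shiftability when both inputs are shiftable.

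For the boundary case $a=1$, the desired array is exactly $SMR(4b+3,\,3(4b+3);\,6,\,2)$, so Lemma \ref{4b+3,3(4b+3);6,2} supplies it directly.

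Since every ingredient is already in place, there is no real obstacle: the only thing to verify is the bookkeeping that $2(a-1)(4b+3)+3(4b+3)=(2a+1)(4b+3)$ and $4(a-1)+6=4a+2$, together with the mild parity hypothesis of Theorem \ref{TH:kn+n';kr+r'}. If anything requires care, it is simply remembering that the $a=1$ subcase must be handled separately because Lemma \ref{4b+3,2a(4b+3);4a,2} provides $A_1$ only for $a-1\geq 1$.
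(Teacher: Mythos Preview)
Your proof is correct and follows essentially the same approach as the paper's: build the $4(a-1)$-piece $A_1$, take the $6$-piece $A_2$ from Lemma~\ref{4b+3,3(4b+3);6,2}, glue them via Theorem~\ref{TH:kn+n';kr+r'}, and handle $a=1$ separately. The only cosmetic difference is that you obtain $A_1$ by citing Lemma~\ref{4b+3,2a(4b+3);4a,2} directly (with $a$ replaced by $a-1$), whereas the paper re-derives that lemma's content inline; your version is slightly cleaner.
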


\begin{proof}
Apply Lemma \ref{L2,4;4,2} with $p=1$ and $q=2b$ to obtain
a shiftable $SMR(2(2b),4(2b);4,2)$ for $b\geq 1$.
Figure \ref{3,6;4,2} displays a shiftable $SMR(3,6; 4,2)$. Therefore there is a shiftable $SMR(4b+3,2(4b+3);4,2)$ by Theorem \ref{TH:km+m',kn+n'}.
Now apply Part 1 of Theorem \ref{TH:kn;kr-km,kn} to obtain a
shiftable $SMR(4b+3,2(a-1)(4b+3);4(a-1),2)$, say $A_1$, for all $a\geq 2$ and $b\geq 1$.
By Lemma \ref{4b+3,3(4b+3);6,2} there exists a shiftable $SMR(4b+3,3(4b+3);6,2)$, say $A_2$, for $b\geq 1$.
Now apply Theorem \ref{TH:kn+n';kr+r'} with $A_1$ and $A_2$ to obtain a shiftable $SMR(4b+3,(2a+1)(4b+3);4a+2,2)$ for $a\geq 2$ and $b\geq 1$. When $a=1$ we apply Lemma \ref{4b+3,3(4b+3);6,2}.
\end{proof}

We summarise the results obtained in Lemmas \ref{4b+1,3(4b+1);6,2}-\ref{4b+3,(2a+1)(4b+3);4a+2,2} in the next theorem.

\begin{theorem}\label{mainTHSEC5}
Let $m$ be odd and $r$ be even. Then there exists a shiftable $SMR(m,n;r,2)$ if and only if $m\geq 3$, $r\geq 4$  and $mr=2n$.
\end{theorem}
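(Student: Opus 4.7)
The plan is to prove both directions of the equivalence.

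For necessity, the relation $mr = 2n$ is just the identity $mr = ns$ with $s = 2$. Since every column has two filled cells we need $m \geq s = 2$, and $m$ odd then forces $m \geq 3$. To rule out $r = 2$: when $r = 2$ each row is a pair $\{+k,-k\}$ (the two entries must sum to $0$) and each column likewise a pair $\{+\ell,-\ell\}$; but then tracing a fixed value $k$ through its row and through its column forces a second occurrence of $k$ in the array, which is impossible. Combined with $r$ even, this yields $r \geq 4$.

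For sufficiency, I would case-split on $m \pmod 4$ and $r \pmod 4$, writing $m = 4b+1$ or $4b+3$ and $r = 4a$ or $4a+2$. Each of the four resulting residue combinations is precisely the conclusion of one of Lemmas \ref{4b+1,2a(4b+1);4a,2}, \ref{4b+3,2a(4b+3);4a,2}, \ref{4b+1,(2a+1)(4b+1);4a+2,2}, and \ref{4b+3,(2a+1)(4b+3);4a+2,2}. So for $m \geq 5$ the proof is essentially a one-line citation of the matching lemma in each case.

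The only residual case is $m = 3$ (the instance $b = 0$ of the $m = 4b+3$ branch), which lies outside the $b \geq 1$ hypothesis of the relevant lemmas. I would dispatch this directly from the explicit shiftable rectangles already in the paper: Part~1 of Theorem \ref{TH:kn;kr-km,kn} applied to the shiftable $SMR(3,6;4,2)$ of Figure~\ref{3,6;4,2} (with $k = a$) yields a shiftable $SMR(3,6a;4a,2)$ for all $a \geq 1$, covering $r \equiv 0 \pmod 4$; combining that rectangle with the shiftable $SMR(3,9;6,2)$ of Figure~\ref{3,9;6,2} via Theorem \ref{TH:kn+n';kr+r'} (with $k = a-1$) yields a shiftable $SMR(3,6a+3;4a+2,2)$, covering $r \equiv 2 \pmod 4$. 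Beyond this the main obstacle is purely bookkeeping: checking that the case split is exhaustive, and that the small boundary instances (notably $a = 1$ in the $4a+2$ lemmas, and $b = 1$ in the $4b+1$ and $4b+3$ lemmas) each trace back to an explicit shiftable figure from Sections~\ref{SEC2} and~\ref{SEC5} (Figures~\ref{3,6;4,2}, \ref{5,10;4,2}, \ref{4,12;6,2}, \ref{5,15;6,2}, \ref{3,9;6,2}) that anchors the recursive constructions.
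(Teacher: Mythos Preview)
Your proposal is correct and follows the same strategy as the paper, which presents Theorem~\ref{mainTHSEC5} simply as a summary of the preceding lemmas without further argument. In fact you are more careful than the paper: you supply the necessity direction explicitly, and you correctly notice and patch the case $m=3$, which the stated hypotheses $b\geq 1$ in Lemmas~\ref{4b+3,2a(4b+3);4a,2} and~\ref{4b+3,(2a+1)(4b+3);4a+2,2} do not cover.
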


We are now ready to state the main theorem of this paper.

\noindent {\bf Main Theorem.} {\em There exists an
$SMR(m,n;r, 2)$ if and only if either $m=2$, $n\equiv 0,3 \pmod 4$ and $n=r$ or
$m,r\geq 3$ and $mr=2n$.}


\end{document}